\newtheorem{thm}{Theorem}[section]
\newtheorem{lem}[thm]{Lemma}
\newtheorem{pro}[thm]{Proposition}
\newtheorem{cor}[thm]{Corollary}
\newtheorem{rem}[thm]{Remark}
\numberwithin{equation}{section}
\newcommand{\dg}{\mathrm{deg}}
\newcommand{\id}{\mathrm{id}}
\newcommand{\Ker}{\mathrm{Ker}}
\newcommand{\im}{\mathrm{Im}}
\newcommand{\Hom}{\mathrm{Hom}}
\newcommand{\ad}{\mathrm{ad}}
\newcommand{\Der}{\mathrm{Der}}
\newcommand{\HCE}{\mathrm{H}}
\newcommand{\dd}{\mathrm{d}}
\newcommand{\stab}{\mathrm{Stab}}
\newcommand{\N}{\mathbb{N}}
\newcommand{\Z}{\mathbb{Z}}
\newcommand{\F}{\mathbb{F}}
\newcommand{\gf}{\mathfrak{g}}
\newcommand{\slf}{\mathfrak{sl}}
\newcommand{\spf}{\mathfrak{sp}}
\newcommand{\psl}{\mathfrak{psl}}
\newcommand{\mc}{\mathcal{M}}
\newcommand{\wc}{\mathcal{W}}
\newcommand{\vc}{\mathcal{V}}
\newcommand{\xb}{\mathbf{x}}
\newcommand{\ab}{\mathbf{a}}
\newcommand{\bb}{\mathbf{b}}
\newcommand{\eb}{\mathbf{e}}
\newcommand{\zero}{\mathbf{0}}
\begin{document}


\title[Simple derivation algebras]{Lie algebras whose derivation algebras are simple}

\author{J\"org Feldvoss}
\address{Department of Mathematics and Statistics, University of South Alabama,
Mobile, AL 36688-0002, USA}
\email{jfeldvoss@southalabama.edu}

\author{Salvatore Siciliano}
\address{Dipartimento di Matematica e Fisica ``Ennio de Giorgi", Universit\`a del
Salento, Via Provinciale Lecce-Arnesano, I-73100 Lecce, Italy}
\email{salvatore.siciliano@unisalento.it}

\dedicatory{}

\subjclass[2010]{Primary 17B05; Secondary 17B40, 17B56, 17B20, 17B25, 17B50, 17B65,
17B66, 17B68}

\keywords{Lie algebra, derivation algebra, simple Lie algebra, complete Lie algebra,
central extension, covering algebra, universal central extension, Cartan algebra, Witt
algebra, special Lie algebra, Hamiltonian algebra, contact Lie algebra, two-sided Witt
algebra, Virasoro algebra, Jacobson-Witt algebra, restricted Virasoro algebra, restricted
special Lie algebra, restricted Hamiltonian algebra, restricted contact Lie algebra, restricted
Melikian algebra}


\begin{abstract}
It is well known that a finite-dimensional Lie algebra over a field of characteristic zero
is simple exactly when its derivation algebra is simple. In this paper we characterize those
Lie algebras of arbitrary dimension over any field that have a simple derivation algebra.
As an application we classify the Lie algebras that have a complete simple derivation
algebra and are either finite-dimensional over an algebraically closed field of prime
characteristic $p>3$ or $\Z$-graded of finite growth over an algebraically closed field
of characteristic zero.
\end{abstract}


\date{January 26, 2025}

\maketitle


\section*{Introduction}


The extent to which the structure of a Lie algebra is determined by its derivation algebra
has already been studied extensively in the past. 

The main result of this paper is a characterization of Lie algebras whose derivation algebras
are simple. For a finite-dimensional Lie algebra $L$ over a field of characteristic zero, Hochschild
showed in \cite[Theorem 4.4]{H} that its derivation algebra $\Der(L)$ is semi-simple if, and
only if, $L$ is semi-simple. As a consequence, we have in this case that $\Der(L)\cong L$,
and $\Der(L)$ is simple exactly when $L$ is simple (see Theorem~3\,(1) and the observation
after the Remark on page 303 in \cite{dR}). These results fail to hold for Lie algebras defined
over fields of non-zero characteristic or for Lie algebras of infinite dimension. Some partial
results for Lie algebras whose derivation algebras are simple were obtained by de Ruiter in
\cite[Theorem 3]{dR}. We settle this problem here in full generality by proving that, for a
Lie algebra $L$ of arbitrary dimension defined over any field, the derivation algebra $\Der(L)$
is simple if, and only if, $L$ is a suitable homomorphic image of the \emph{universal} central
extension of a simple Lie algebra (see Theorem~\ref{simple}). Moreover, we show that
$\Der(L)$ is simple and complete if, and only if, $L$ is a covering algebra of a complete
simple Lie algebra (see Corollary \ref{simplecomplete}). Recall that a Lie algebra is called
\emph{complete} if it has zero center and every derivation is inner, and a \emph{covering
algebra} of a Lie algebra $L$ is a central extension of $L$ that is perfect. As an application,
we classify those Lie algebras that have a complete simple derivation algebra and are
either finite-dimensional over an algebraically closed field of prime characteristic $p>3$ (see
Corollary~\ref{primchar}) or $\Z$-graded of finite growth over an algebraically closed
field of characteristic zero (see Corollary~\ref{char0}).  Since every finite-dimensional
simple Lie algebra over a field of characteristic zero is always complete, Corollaries
\ref{primchar} and \ref{char0} generalize the above mentioned consequence of
Hochschild's classical result to these classes of modular or infinite-dimensional Lie algebras. 

In this paper the identity function on a set $X$ will be denoted by $\id_X$, the additive
group of integers will be denoted by $\Z$, the additive monoid of non-negative integers
will be denoted by $\N_0$, and the additive semigroup of positive integers will be denoted
by $\N$. All other notation is either standard or the same as in \cite{StrI}.


\section{Derivations and central extensions of Cartan algebras}\label{dercartanalg}


Let $r$ be a positive integer, and let $A:=\F[x_1,\dots,x_r]$ denote the algebra of polynomials
in $r$ commuting variables $x_1$, \dots, $x_r$ with coefficients in a field $\F$ of characteristic
zero. In the following we briefly describe the four families of infinite-dimensional Cartan algebras
(see \cite[Section 0.1.3]{M}). Let $W_r:=\Der(A)$ be the Lie algebra of derivations of $A$,
the so-called {\em Witt algebra\/}. Recall that the module of K\"ahler differentials of $A$ is the
free $A$-module $\Omega^1_{A/\F}:=A\dd x_1\oplus\cdots\oplus A\dd x_r$ generated by
the differentials $\dd x_1$, \dots, $\dd x_r$ of the variables of $A$. Note that any derivation
$D\in W_r$ acts in a natural way as a derivation on the exterior algebra $\Omega_{A/\F}:=
\bigoplus\limits_{n\in\N_0}\left(\Lambda^n\Omega^1_{A/\F}\right)$ of K\"ahler differential forms
on $A$. It is straightforward to see that for any differential form $\omega\in\Omega_{A/\F}$
the sets $\{D\in W_r\mid D(\omega)=0\}$ and $\{D\in W_r\mid D(\omega)\in A\omega\}$
are subalgebras of $W_r$.

In this way one obtains the following three families of simple subalgebras of the Witt algebras
(see \cite[Section 0.1.3]{M}):
\begin{itemize}
\item The {\em special Lie algebras\/} $S_r:=\{D\in W_r\mid D(\dd x_1\wedge\cdots\wedge\dd x_r)
         =0\}$,
\item the {\em Hamiltonian algebras\/} $H_{2r}:=\{D\in W_{2r}\mid D\left(\sum\limits_{j=1}^r
         \dd x_j\wedge\dd x_{j+r}\right)=0\}$,
\item the {\em contact Lie algebras\/} $K_{2r+1}:=\{D\in W_{2r+1}\mid D(\omega)\in A\omega\}$,
         where $\omega:=\dd x_{2r+1}+\sum\limits_{j=1}^r(x_j\dd x_{j+r}-x_{j+r}\dd x_j)$.
\end{itemize}         
These four families of infinite-dimensional simple Lie algebras are called {\em Cartan algebras\/}.
It is well known that every Cartan algebra $L$ is simple (see \cite[Lemma~1.14]{M}) and graded
over the integers, i.e., $L=\bigoplus\limits_{n\in\Z}L_n$ such that $[L_m,L_n]\subseteq L_{m+n}$.
In this section we will determine which of the Cartan algebras are complete and then compute
the central extensions of the complete Cartan algebras.

\begin{pro}\label{complete}
Let $L$ be a Cartan algebra over a field $\F$ of characteristic zero. Then $L$ is complete if,
and only if, $L$ is a Witt algebra $W_r$ or a contact Lie algebra $K_{2r+1}$.
\end{pro}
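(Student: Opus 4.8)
The plan is to reduce completeness to the vanishing of outer derivations and then to exploit the integral grading of $L$. Since every Cartan algebra is simple and not one-dimensional, its center is zero, so $L$ is complete if, and only if, $\Der(L)=\ad(L)$, i.e.\ every derivation is inner. I write $L=\bigoplus_{n\ge -s}L_n$ for the standard grading, where the depth is $s=1$ for $W_r,S_r,H_{2r}$ and $s=2$ for $K_{2r+1}$; each $L_n$ is finite-dimensional, $L_{-1}$ is a faithful $L_0$-module, and $L_0=\mathfrak{gl}_r,\ \slf_r,\ \spf_{2r},\ \mathfrak{csp}_{2r}$ for $L=W_r,S_r,H_{2r},K_{2r+1}$, respectively. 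The distinguished derivation is the grading derivation $d$ defined by $d|_{L_n}=n\,\id$; it is inner precisely when $L_0$ contains a grading element $h$ (so that $\ad(h)|_{L_n}=n\,\id$, and in particular $\ad(h)|_{L_{-1}}=-\id_{L_{-1}}$). I note that $W_r$ contains $\mathcal{E}=\sum_i x_i\partial_i\in L_0$ and that $K_{2r+1}$ contains $2x_{2r+1}\partial_{2r+1}+\sum_{i=1}^{2r}x_i\partial_i\in L_0$, each realizing $d$, whereas $\slf_r$ and $\spf_{2r}$ contain no scalar operator.

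First I would dispose of $S_r$ and $H_{2r}$ by showing that $d$ is outer. If $d=\ad(F)$ with $F=\sum_j F_j$ and $F_j\in L_j$, then comparing the components that send $L_{-1}$ into $L_{-1}$ forces $\ad(F_0)|_{L_{-1}}=-\id_{L_{-1}}$ with $F_0\in L_0$. For $S_r$ this demands a trace-zero endomorphism of $L_{-1}$ acting as $-\id$, which is impossible since $-\id$ has trace $-r\ne 0$; for $H_{2r}$ it demands a symplectic endomorphism acting as $-\id$, which is impossible since $-\id$ does not preserve the symplectic form. Hence $d$ is a non-inner derivation and neither $S_r$ nor $H_{2r}$ is complete.

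For $W_r$ and $K_{2r+1}$ I would prove that every derivation is inner in two steps. The reduction step writes an arbitrary $D\in\Der(L)$ as $D=\sum_k D_k$, where $D_k$ is the homogeneous derivation of degree $k$ sending $x\in L_n$ to the degree-$(n+k)$ component of $D(x)$; this sum is \emph{finite}, because $L$ is finitely generated (by $\bigoplus_{-s\le n\le N}L_n$ for a suitable $N$), each graded piece is finite-dimensional, and a derivation is determined by its values on generators, so $D_k=0$ for all but finitely many $k$. It therefore suffices to treat a homogeneous derivation $D$ of degree $k$: the aim is to find $\xi\in L_k$ with $\ad(\xi)|_{L_{-1}}=D|_{L_{-1}}$ (and, in the finitely many low degrees where the image lands in the bottom of the grading, matching instead on $L_0$), after which $D-\ad(\xi)$ is a homogeneous derivation vanishing on the generating low-degree part and a transitivity induction forces it to vanish identically. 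At degree $0$ the required $\xi$ exists exactly because $\ad(L_0)|_{L_{-1}}$ already exhausts the relevant endomorphisms of $L_{-1}$ --- this is all of $\End(L_{-1})=\mathfrak{gl}_r$ for $W_r$, and it contains the scaling operator for $K_{2r+1}$ --- which is precisely the property that fails for $S_r$ and $H_{2r}$.

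I expect the main obstacle to be the existence of the matching element $\xi\in L_k$ in positive degrees, i.e.\ the surjectivity of $\ad\colon L_k\to\Hom(L_{-1},L_{k-1})$ onto those restrictions that actually extend to a derivation; this is a compatibility computation controlled by the derivation identity $D([a,y])=[D(a),y]+[a,D(y)]$ for $a\in L_0$, $y\in L_{-1}$, and it rests on the representation theory of $L_0$ on the homogeneous components $L_k$. The feature responsible for the rigidity of $W_r$ and $K_{2r+1}$ --- equivalently, for their being self-normalizing inside the ambient Witt algebra --- is exactly that $L_0$ contains the full scaling operator, which simultaneously makes the grading derivation inner and closes up the degree-by-degree matching; its absence from $\slf_r$ and $\spf_{2r}$ is what leaves the outer grading derivation $d$ on $S_r$ and $H_{2r}$. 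A minor additional point is the depth-two grading of $K_{2r+1}$, where the bottom piece $L_{-2}$ must be carried along in the transitivity induction.
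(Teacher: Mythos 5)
Your treatment of $S_r$ and $H_{2r}$ is correct and essentially complete: exhibiting the grading derivation as outer by observing that its degree-zero component would have to act on $L_{-1}$ as $-\id$, which lies in neither $\slf_r$ nor $\spf_{2r}$, is a sound argument in characteristic zero (the paper derives the same incompleteness from the centerlessness of $L_0\cong\slf_r$ or $\spf_{2r}$, but your obstruction is equivalent). The reduction of an arbitrary derivation to a finite sum of homogeneous components via finite generation is also fine.

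The gap is in the positive direction. For $W_r$ and $K_{2r+1}$ you reduce completeness to the existence, for each homogeneous derivation $D$ of degree $k$, of an element $\xi\in L_k$ with $\ad(\xi)$ matching $D$ on the local part of the grading, and you then explicitly defer this step (``I expect the main obstacle to be the existence of the matching element $\xi$'') without proving it. But that surjectivity statement \emph{is} the content of the proposition: it amounts to the vanishing of $\HCE^1(L,L)$ in every degree, and it does not follow from transitivity or from the presence of the scaling operator in $L_0$ alone. Even the degree-zero case for $K_{2r+1}$ is not immediate: one must show that $D_0|_{L_{-1}}$ actually lands in $\mathfrak{csp}_{2r}=\ad(L_0)|_{L_{-1}}$ rather than in all of $\mathfrak{gl}_{2r}$, which requires using compatibility with $L_{-2}=[L_{-1},L_{-1}]$; and in positive degrees the required cocycle computations are genuinely nontrivial. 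The paper fills exactly this hole with external input: for $W_r$ it embeds the algebra into the two-sided Witt algebra, uses the resulting $\Z^r$-grading together with Proposition 1.2 of \cite{F3} to reduce to degree-preserving derivations, and then runs the computations of \cite{IK}; for $K_{2r+1}$ it quotes Theorem 3.1 of \cite{SS} directly. Without either such citations or the explicit degree-by-degree verification, your argument establishes only that $S_r$ and $H_{2r}$ fail to be complete, not that $W_r$ and $K_{2r+1}$ are complete.
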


\begin{proof}
As every Cartan algebra $L$ is simple, we have that $C(L)=0$. Hence, it suffices to compute
the first adjoint cohomology space $\HCE^1(L,L)$.

We begin by considering the Witt algebras $W_r$. In order to show that the first adjoint
cohomology $\HCE^1(W_r,W_r)$ vanishes, we will use that $W_r$ can be considered as
a $G$-graded subalgebra of the two-sided Witt algebra
$$W(G,I):=\bigoplus_{\ab\in G}\F w(\ab,i)\,,$$
where $G:=\Z^r$, $I:=\{1,\dots,r\}$, and the Lie bracket on the basis elements $w(\ab,i)$
is defined by
$$[w(\ab,i),w(\bb,j)]=a_jw(\ab+\bb,i)-b_iw(\ab+\bb,j)$$
for all $\ab=(a_1,\ldots,a_r),\bb=(b_1,\ldots,b_r)\in G$ (see \cite[Section~1]{IK}).
Namely, set $\xb^\ab:=x_1^{a_1}\cdots x_r^{a_r}$ for every $r$-tuple $\ab=
(a_1,\dots,a_r)$ of integers, $\eb_j:=(\delta_{ij})_{i\in I}$, where $\delta_{ij}$
denotes the usual Kronecker symbol, and $\partial_i:=\frac{\partial}{\partial x_i}$
for every $i\in I$. Then the linear transformation $\theta:W_r\to W(G,I)$ defined
by $\xb^\ab\partial_i\mapsto -w(\ab-\eb_i,i)$ is a monomorphism of Lie algebras.
Moreover, the Witt algebra $L:=\theta (W_r)$ inherits a $G$-grading $L=
\bigoplus\limits_{\ab\in G}L_\ab$ from $W(G,I)$ via $L_\ab:=W_\ab\cap L$,
where $W_\ab:=\bigoplus\limits_{i=1}^r\F w(\ab,i)$, and one observes that
\begin{eqnarray*}
L_\ab=
\left\{
\begin{array}{cl}
W_\ab & \mbox{ if }a_i\ge 0\mbox{ for every }i\in I\\
\F w(\ab,j) & \mbox{ if }a_j=-1\mbox{ for exactly one }j\in I\mbox{ and }
a_i\ge 0\mbox{ for every }i\in I\setminus\{j\}\,.\\
0 & \mbox{ otherwise}
\end{array}
\right.
\end{eqnarray*}
In particular, we have that $L_\zero=W_\zero$, where $\zero:=(0,\dots,0)\in G$,
and $L_{-\eb_i}=\F w(-\eb_i,i)$ for every $i\in I$.

Next, we would like to apply \cite[Proposition 1.2]{F3}. Note that it follows from
\cite[Lemma 3.11]{M} in conjunction with \cite[Lemma 3.4]{M} that $L$ is finitely
generated unless $r=1$. But in the latter case it is clear that $\{\theta(\partial_1),
\theta(x_1\partial_1),\theta(x_1^2\partial_1),\theta(x_1^3\partial_1)\}$ generates $L$.

It remains to verify that $\HCE^1(L_\zero,L_\ab)=0$ for every $r$-tuple $\ab\ne
\zero$ of integers and $\Hom_{L_\zero}(L_\ab,L_\bb)=0$ for all $\ab=(a_1,\ldots,
a_r),\bb=(b_1,\ldots,b_r)\in G$ with $\ab\ne\bb$. Note that $L_\ab$ is either 0
or $\F w(\ab,j)$ for some integer $j\in I$ or the direct sum of the one-dimensional
$L_\zero$-modules $\F w(\ab,i)$ ($i\in I$). If $\ab\ne\zero$, then we have that
$[\F w(\ab,i)]^{L_\zero}=0$ for every integer $i\in I$, and since $L_\zero$ is abelian,
it follows from \cite[Lemma~3]{Ba} that $\HCE^1(L_\zero,L_\ab)=0$. Moreover,
we obtain that
$$\Hom_{L_\zero}(L_\ab,L_\bb)=\bigoplus_{i\in I_\ab,j\in I_\bb}\Hom_{L_\zero}
(\F w(\ab,i),\F w(\bb,j))$$
for some subsets $I_\ab\subseteq I$ and $I_\bb\subseteq I$, and thus it suffices
to show that every summand vanishes when $\ab\ne\bb$. So by contraposition,
we suppose that there exists a non-zero element $\psi\in\Hom_{L_\zero}(\F
w(\ab,i),\F w(\bb,j))$ for fixed but arbitrary integers $i\in I_\ab$ and $j\in I_\bb$.
Then $\psi(w(\ab,i))=\lambda w(\bb,j)$ for some non-zero scalar $\lambda\in\F$,
and therefore we obtain for every integer $k\in I$ that
$$-a_k\lambda w(\bb,j))=\psi([w(\zero,k),w(\ab,i)])=[w(\zero,k),\psi(w(\ab,i))]=
-b_k\lambda w(\bb,j)\,$$
i.e., $(a_k-b_k)\lambda w(\bb,j)=0$. Hence, we conclude that $\ab=\bb$.

Consequently, we deduce from \cite[Proposition 1.2]{F3} that
$$\Der(L)=\ad(L)+\Der(L)_\zero\,,$$
where $\Der(L)_\zero$ denotes the subspace of the degree-preserving derivations
of $L$, and it remains to show that every derivation $D\in \Der(L)_\zero$ is inner.
For this purpose, we will now use the arguments of Ikeda and Kawamoto in their
proof of Theorem~2 for a finite index set (see \cite[Section 3]{IK}). Note that
the elements $w(-\eb_i,i)$ and $w(\ab+e_i,i)$ in (3.10) and (3.11) are contained
in $L$. Of course, for the latter this might only be the case when the corresponding
element $w(\ab,i)$ belongs to $L$. The same is true for the elements $w(\eb_h,h)$,
$w(\eb_h,i)$ ($h\ne i$), and $w(2\eb_h,i)$ in (3.20)--(3.23). Finally, this also holds
for the element $x$ in (3.31) as $L_\zero=W_\zero=\bigoplus\limits_{i=1}^r\F w
(\zero,i)$. In conclusion, the arguments in Section 3 of \cite{IK} prove that every
degree-preserving derivation of $L$ is inner, and therefore $\Der(L)=\ad(L)$, as
desired.

We now consider the contact Lie algebras $K_{2r+1}$. In the notation of the paper
\cite{SS}, one observes that $K_{2r+1}$ is isomorphic to the Lie algebra $\mathscr{K}
(\vec{\ell},\sigma,\Gamma,\mathscr{I})$ in the particular case when $\vec{\ell}:=
(0,0,0,0,0,r)$, $\sigma:=0$, $\Gamma:=\{0\}$, and $\mathscr{I}:=\mathbb{N}_0^{1+2r}$.
As a consequence, it follows from \cite[Theorem 3.1]{SS} that every derivation of
$K_{2r+1}$ is inner.

In order to show that the special Lie algebras and the Hamiltonian algebras have outer
derivations, we use their usual $\Z$-gradings and consider the ``degree" derivation
$D_0:=(\ad\,h)_{\vert L}\in\Der(L)$ for $L:=S_r$ or $L:=H_{2r}$, where $h:=
\sum\limits_{i=1}^rx_i\partial_i\in W_r$. Suppose that $D_0=\ad_L\,D$ for some
$D\in L$. Then we obtain that
$$\dg(D)D=[h,D]=D_0(D)=[D,D]=0$$
(cf.\ Theorem 2.5\,(1) in Chapter 4 of \cite{SF}), and therefore we deduce that
$\dg(D)=0$ or $D=0$, i.e., $D\in L_0$. In the first case $[D,X]=[h,X]=\dg(X)X=0$
for any $X\in L_0$ also shows that $D\in C(L_0)=0$ (as $L_0\cong\slf_r(\F)$ or
$L_0\cong\spf_{2r}(\F)$ are both simple Lie algebras), i.e., in both cases we
conclude that $D_0=0$, which contradicts $D_0=\id_{L_1}$ and $L_1\ne 0$.
\end{proof}


The next result is a special case of the corresponding results for generalizations
of the Witt and contact Lie algebras obtained by Su and Zhao \cite{SZ} and Song
and Su \cite{SS}, respectively.

\begin{pro}\label{centext}
Let $\F$ be a field of characteristic zero. Then $\HCE^2(W_r,\F)=0$ and
$\HCE^2(K_{2r+1},\F)=0$.
\end{pro}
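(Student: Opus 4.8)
The plan is to realize both $W_r$ and $K_{2r+1}$ as particular instances of the generalized Cartan-type Lie algebras whose second cohomology with trivial coefficients has already been determined, and then to read off that the relevant cohomology vanishes. Recall that $\HCE^2(L,\F)$ classifies the one-dimensional central extensions of $L$, so the assertion amounts to saying that $W_r$ and $K_{2r+1}$ admit no nontrivial one-dimensional central extension.

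For the Witt algebra I would reuse the embedding from the proof of Proposition~\ref{complete}, which realizes $W_r$ as the $G$-graded subalgebra $L:=\theta(W_r)$ of the two-sided Witt algebra $W(G,I)$ with $G:=\Z^r$ and $I:=\{1,\dots,r\}$. This exhibits $W_r$ as a generalized Witt algebra in the sense of Su and Zhao \cite{SZ}, and I would then quote their computation of $\HCE^2$ for such algebras, specialized to the present parameters. The essential feature is that, for the \emph{polynomial} (rather than Laurent) Witt algebra, the grading runs over all nonnegative degrees, so that the candidate cocycle classes produced by the general formula—which are supported on pairs of opposite degree—reduce to coboundaries. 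As a sanity check for the rank-one case one may verify directly that the single Virasoro-type cocycle of the full one-variable Witt algebra restricts to zero on the part of degree $\ge -1$: writing the basis as $x^{n+1}\partial_1$ for $n\ge -1$, the only pairs of opposite degree are those indexed by $(0,0)$, $(1,-1)$, and $(-1,1)$, and the cocycle value $(m^3-m)/12$ vanishes in each case. Hence $\HCE^2(W_r,\F)=0$.

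For the contact Lie algebra I would again invoke the identification already established in the proof of Proposition~\ref{complete}, namely $K_{2r+1}\cong\mathscr{K}(\vec{\ell},\sigma,\Gamma,\mathscr{I})$ in the notation of Song and Su \cite{SS}, with $\vec{\ell}:=(0,0,0,0,0,r)$, $\sigma:=0$, $\Gamma:=\{0\}$, and $\mathscr{I}:=\N_0^{1+2r}$. Since \cite{SS} computes the second cohomology with trivial coefficients of the algebras $\mathscr{K}(\vec{\ell},\sigma,\Gamma,\mathscr{I})$, I would specialize their result to these parameter values and verify that it yields $\HCE^2(K_{2r+1},\F)=0$.

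The main obstacle I anticipate is bookkeeping rather than genuine computation: one must match the conventions and normalizations of \cite{SZ} and \cite{SS} to those used here and, more delicately, confirm that the chosen parameter values do not land in one of the exceptional low-rank cases in which the general theorems produce a surviving cocycle class. In particular, for $W_1$ and $K_3$ one should check explicitly that no exceptional summand of $\HCE^2$ remains, so that the vanishing really does hold uniformly for all $r$.
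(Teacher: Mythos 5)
Your proposal takes essentially the same route as the paper: the actual proof simply identifies $W_r$ with the generalized Witt algebra $W(r,0,0,0,\{0\})$ of Su--Zhao and quotes \cite[Lemma 3.2]{SZ}, and identifies $K_{2r+1}$ with $\mathscr{K}(\vec{\ell},\sigma,\Gamma,\mathscr{I})$ for the same parameters you list and quotes \cite[Lemma 4.1]{SS}. Your rank-one sanity check and the caution about exceptional parameter values are reasonable extras, though note that the relevant point for $W_r$ is that it \emph{is} one of the algebras in the Su--Zhao family (with $\Gamma=\{0\}$), not merely a subalgebra of the two-sided Witt algebra $W(G,I)$, from which the vanishing of $\HCE^2$ would not follow by restriction.
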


\begin{proof}
In the notation of the paper \cite{SZ}, $W_r$ is isomorphic to the Lie algebra
$W(r,0,0,0,\{0\})$. Hence, it follows from \cite[Lemma 3.2]{SZ} that $\HCE^2
(W_r,\F)=0$. Moreover, as already has been observed in the proof of Proposition
\ref{complete}, in the notation of the paper \cite{SS}, $K_{2r+1}$ is isomorphic
to the Lie algebra $\mathscr{K}(\vec{\ell}, \sigma,\Gamma,\mathscr{I})$, where
$\vec{\ell}:=(0,0,0,0,0,r)$, $\sigma:=0$, $\Gamma:=\{0\}$, and $\mathscr{I}:=
\mathbb{N}_0^{1+2r}$. Therefore, by \cite[Lemma~4.1]{SS}, we conclude that
$\HCE^2(K_{2r+1},\F)=0$, as desired.  
\end{proof}


\section{Covering algebras}\label{covalg}


Let $0\to\bar{C}\to\bar{L}\to L\to 0$ be a central extension of a Lie algebra $L$. Then
$\bar{L}$ is called a \emph{covering algebra} of $L$ if $\bar{L}$ is perfect, i.e., $\bar{L}
=[\bar{L},\bar{L}]$ (see \cite[Definition 1.4]{G} or \cite[Section~2]{BM}). Since homomorphic
images of perfect Lie algebras are perfect, only perfect Lie algebras admit a covering algebra.
Moreover, if $C(L)=0$, then $\bar{C}=C(\bar{L})$.

\begin{lem}\label{split}
Let $L$ be a Lie algebra over a field $\F$ with $\HCE^2(L,\F)=0$. Then $L$ has at most
one covering algebra up to isomorphism.
\end{lem}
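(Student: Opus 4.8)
The plan is to show that the hypothesis $\HCE^2(L,\F)=0$ is strong enough to force \emph{every} central extension of $L$ to split, not merely the one-dimensional ones, and then to use perfectness to collapse any covering algebra onto $L$ itself. First I would record the standard observation that a covering algebra can exist only when $L$ is perfect: a covering algebra is perfect by definition and $L$ is a homomorphic image of it, so perfectness is inherited. Thus if $L$ is not perfect there are no covering algebras and the statement holds vacuously, and I may assume henceforth that there is at least one covering algebra to analyze.

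The first substantive step is to upgrade the hypothesis from coefficients in $\F$ to coefficients in an arbitrary trivial module. Over a field the universal coefficient theorem gives $\HCE^2(L,M)\cong\Hom_\F(\HCE_2(L,\F),M)$ for every trivial $L$-module $M$, and in particular $\HCE^2(L,\F)\cong\HCE_2(L,\F)^\ast$. Since the dual of a vector space vanishes exactly when the space itself vanishes, the hypothesis $\HCE^2(L,\F)=0$ is equivalent to $\HCE_2(L,\F)=0$, and this in turn yields $\HCE^2(L,M)=0$ for \emph{every} trivial module $M$. Because the kernel $\bar{C}$ of any central extension $0\to\bar{C}\to\bar{L}\to L\to 0$ is a trivial $L$-module, it follows that every central extension of $L$ splits as an extension of Lie algebras.

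Now I would take an arbitrary covering algebra $0\to\bar{C}\to\bar{L}\to L\to 0$. By the previous step there is a Lie algebra section $s\colon L\to\bar{L}$, so that $\bar{L}=s(L)\oplus\bar{C}$ with $s(L)\cong L$ a subalgebra and $\bar{C}$ a central abelian ideal. Consequently $[\bar{L},\bar{L}]=[s(L),s(L)]=s([L,L])\subseteq s(L)$. Since $\bar{L}$ is perfect, this forces $\bar{L}=s(L)$, whence $\bar{C}=0$ and $\bar{L}\cong L$ (and incidentally $L=[L,L]$). Thus every covering algebra of $L$ is isomorphic to $L$, and therefore $L$ has at most one covering algebra up to isomorphism.

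The only delicate point I anticipate is the universal coefficient step, where one must allow $\bar{C}$ to be infinite-dimensional: it is precisely the identification $\HCE^2(L,M)\cong\Hom_\F(\HCE_2(L,\F),M)$ that guarantees splitting for kernels of arbitrary dimension, and it is here that working over a field, so that the relevant $\Ext$ term vanishes, is essential. Everything following that identification is formal.
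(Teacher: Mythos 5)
Your proof is correct and follows essentially the same route as the paper's: split the central extension using $\HCE^2(L,\F)=0$, then use perfectness of the covering algebra to force $\bar{L}\cong L$. The only difference is that you make explicit, via the universal coefficient theorem, why the vanishing of $\HCE^2(L,\F)$ forces central extensions with arbitrary (possibly infinite-dimensional) kernel $\bar{C}$ to split, a point the paper's proof leaves implicit.
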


\begin{proof}
Let $\bar{L}$ be any covering algebra of $L$, i.e., $\bar{L}$ is perfect and there exists
a central extension $0\to\bar{C}\to\bar{L}\to L\to 0$. Since we have that $\HCE^2(L,
\F)=0$, the central extension splits, i.e., $\bar{L}\cong L\times\bar{C}$, and thus
we obtain that
$$\bar{L}=[\bar{L},\bar{L}]\cong[L\times\bar{C},L\times\bar{C}]=[L,L]=L\,,$$
which completes the proof.
\end{proof}

A \emph{universal central extension} of a Lie algebra $L$ is a central extension
$$0\to\hat{C}\to\hat{L}\stackrel{\hat{\pi}}\to L\to 0$$
of $L$ such that for every central extension $0\to Z\to E\stackrel{\pi}\to L\to 0$
of $L$ there exists a unique homomorphism $\varphi:\hat{L}\to E$ of Lie algebras
satisfying $\pi\circ\varphi=\hat{\pi}$. Note that then necessarily $\varphi(\hat{C})
\subseteq Z$. It is clear from this definition that a universal central extension of
$L$ is unique up to isomorphism, in case it exists. Since universal central extensions
must be perfect, they are covering algebras. It is well known that a Lie algebra $L$
over a field $\F$ has a universal central extension if, and only if, $L$ is perfect
(see \cite[Theorem 7.9.2]{W}). Moreover, we conclude from \cite[Lemma 3.1]{B1}
and \cite[Lemma 7.9.3]{W} that a central extension $0\to\hat{C}\to\hat{L}
\stackrel{\hat{\pi}}\to L\to 0$ is universal exactly when $\HCE^1(\hat{L},\F)=
\HCE^2(\hat{L},\F)=0$. In particular, the property of being universal is independent
of $L$. This will be used in the proof of the next result which shows that every
covering algebra of a Lie algebra $L$ is a homomorphic image of some universal
central extension of $L$. For this reason sometimes universal central extensions
are called {\em universal covering algebras\/} (e.g.\ cf.\ \cite[Definition 1.6]{G}
or \cite[Section 2]{BM}).

\begin{lem}\label{covalg}
Let $L$ be a perfect Lie algebra. Then a universal central extension of $L$ is
also a universal central extension of every covering algebra of $L$. In particular,
if $0\to\hat{C}\to\hat{L}\stackrel{\hat{\pi}}\to L\to 0$ is a universal central
extension of $L$, and $\bar{L}$ is a covering algebra of $L$, then there
exists a central ideal $C$ of $\hat{L}$ such that $C\subseteq\hat{C}$ and
$\bar{L}\cong\hat{L}/C$.
\end{lem}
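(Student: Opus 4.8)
The plan is to build the comparison homomorphism coming from the universal property of $\hat{L}$, show that it exhibits $\hat{L}$ as a central extension of $\bar{L}$, and then invoke the cohomological characterization of universality so that the universal property transfers from $L$ to $\bar{L}$ essentially for free.

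First I would fix a covering algebra $\bar{L}$ of $L$, given by a central extension $0\to\bar{C}\to\bar{L}\stackrel{\bar{\pi}}\to L\to 0$ with $\bar{L}$ perfect, together with a universal central extension $0\to\hat{C}\to\hat{L}\stackrel{\hat{\pi}}\to L\to 0$. Since $\bar{\pi}$ makes $\bar{L}$ a central extension of $L$, the universal property of $\hat{L}$ supplies a unique homomorphism $\varphi\colon\hat{L}\to\bar{L}$ with $\bar{\pi}\circ\varphi=\hat{\pi}$.

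Next I would check that $\varphi$ realizes $\hat{L}$ as a central extension of $\bar{L}$. For surjectivity, note that $\bar{\pi}\circ\varphi=\hat{\pi}$ is surjective, whence $\varphi(\hat{L})+\bar{C}=\bar{L}$; since $\bar{C}$ is central and $\bar{L}$ is perfect, this gives $\bar{L}=[\bar{L},\bar{L}]=[\varphi(\hat{L}),\varphi(\hat{L})]\subseteq\varphi(\hat{L})\subseteq\bar{L}$, forcing $\varphi(\hat{L})=\bar{L}$. For the kernel, if $x\in\Ker\varphi$ then $\hat{\pi}(x)=\bar{\pi}(\varphi(x))=0$, so $x\in\hat{C}$; hence $C:=\Ker\varphi$ is a central ideal of $\hat{L}$ with $C\subseteq\hat{C}$, and $0\to C\to\hat{L}\stackrel{\varphi}\to\bar{L}\to 0$ is a central extension of $\bar{L}$.

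Finally, universality transfers at once. By the criterion recalled just before the lemma, $\hat{L}$ being a universal central extension of $L$ is equivalent to $\HCE^1(\hat{L},\F)=\HCE^2(\hat{L},\F)=0$, a condition on the total space $\hat{L}$ alone. As the central extension $\varphi\colon\hat{L}\to\bar{L}$ has this same total space $\hat{L}$, it is therefore universal as well, which proves the first assertion. The ``in particular'' statement is then immediate: the first isomorphism theorem applied to the surjection $\varphi$ with kernel $C$ yields $\bar{L}\cong\hat{L}/C$ with $C$ a central ideal contained in $\hat{C}$. The only step requiring genuine care is the middle one --- verifying surjectivity through the perfectness of $\bar{L}$ and identifying $\Ker\varphi$ as a central ideal sitting inside $\hat{C}$; once $\hat{L}\to\bar{L}$ is known to be a central extension, the cohomological characterization of universality does all the remaining work.
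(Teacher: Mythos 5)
Your proposal is correct and follows essentially the same route as the paper's proof: construct $\varphi\colon\hat{L}\to\bar{L}$ from the universal property, deduce surjectivity from $\bar{L}=\im(\varphi)+\bar{C}$ together with perfectness of $\bar{L}$ and centrality of $\bar{C}$, identify $\Ker(\varphi)\subseteq\hat{C}$, and transfer universality via the cohomological criterion $\HCE^1(\hat{L},\F)=\HCE^2(\hat{L},\F)=0$, which depends only on the total space. No gaps.
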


\begin{proof}
Let $\bar{L}$ be a covering algebra of $L$, i.e., $\bar{L}$ is perfect and
there exists a central extension $0\to\bar{C}\to\bar{L}\stackrel{\bar{\pi}}\to
L\to 0$. Let $\varphi:\hat{L}\to L$ be the unique homomorphism of Lie algebras
such that the following diagram commutes:
\[
\xymatrix{
0\ar[r]& \hat{C}\ar[d]\ar[r] & \hat{L}\ar[d]^{\varphi}\ar[r]^{\hat{\pi}} & {L}\ar@{=}[d]\ar[r] & 0\\
0\ar[r] & \bar{C}\ar[r] & \bar{L}\ar[r]^{\bar{\pi}} & L\ar[r] & 0}
\]
It follows from the commutativity of the diagram that $\bar{\pi}(\im(\varphi))=
L=\bar{\pi}(\bar{L})$, and therefore we obtain that $\bar{L}=\im(\varphi)+
\bar{C}$. Since $\bar{L}$ is perfect and $\bar{C}$ is central in $\bar{L}$,
we conclude then that $\varphi$ is surjective. Moreover, we obtain from the
commutativity of the diagram that $C:=\Ker(\varphi)\subseteq\Ker(\hat{\pi})=
\hat{C}\subseteq C(\hat{L})$, and therefore it follows from the cohomological
characterization of universal central extensions observed above that $0\to C
\to\hat{L}\stackrel{\varphi}\to\bar{L}\to 0$ is a universal central extension of
$\bar{L}$. Moreover, the second part of the assertion is an immediate consequence. 
\end{proof}

Note that Lemma \ref{covalg} is the key result in the proof of Theorem \ref{simple}
below and in the proof of the next result which will be needed to deduce Corollaries
\ref{covalgreswitt} and \ref{covalgwitt} below. 

\begin{cor}\label{1dimcent}
Let $L$ be a Lie algebra over a field $\F$. If $L$ admits a universal central extension
$0\to\hat{C}\to\hat{L}\stackrel{\hat{\pi}}\to L\to 0$ such that $\dim_\F\hat{C}=1$,
then $L$ and $\hat{L}$ are the only covering algebras of $L$ up to isomorphism.
\end{cor}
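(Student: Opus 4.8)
The plan is to apply Lemma \ref{covalg} together with the trivial observation that a one-dimensional vector space has only two subspaces. First I would record that both candidates are genuinely covering algebras of $L$. Since $L$ admits a universal central extension, $L$ is perfect, so the trivial central extension $0\to 0\to L\to L\to 0$ exhibits $L$ itself as a covering algebra of $L$; and $\hat{L}$ is a covering algebra of $L$ by the very definition of a universal central extension (universal central extensions are perfect, hence covering algebras, as noted in the paragraph preceding Lemma \ref{covalg}). Thus the two Lie algebras named in the statement do occur on the list of covering algebras.

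Next I would show that nothing else occurs. Let $\bar{L}$ be an arbitrary covering algebra of $L$. By Lemma \ref{covalg} there is a central ideal $C$ of $\hat{L}$ with $C\subseteq\hat{C}$ and $\bar{L}\cong\hat{L}/C$. Here the hypothesis $\dim_\F\hat{C}=1$ does all the work: the only subspaces of $\hat{C}$ are $\{0\}$ and $\hat{C}$, and since $\hat{C}$ is central in $\hat{L}$, every subspace $C\subseteq\hat{C}$ automatically satisfies $[\hat{L},C]\subseteq[\hat{L},\hat{C}]=0\subseteq C$, so each is a central ideal. Consequently $C=\{0\}$ or $C=\hat{C}$.

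Finally I would read off the two cases. If $C=\{0\}$, then $\bar{L}\cong\hat{L}$. If $C=\hat{C}$, then $\bar{L}\cong\hat{L}/\hat{C}\cong L$, the latter isomorphism being the one induced by $\hat{\pi}$, whose kernel is exactly $\hat{C}$. Hence every covering algebra of $L$ is isomorphic to $L$ or to $\hat{L}$, which is the assertion.

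I do not expect any real obstacle here: the corollary is essentially an immediate specialization of Lemma \ref{covalg}. The only point requiring a moment's care is the bookkeeping at the two ends, namely confirming that $L$ and $\hat{L}$ are not merely the two abstract quotients $\hat{L}/\hat{C}$ and $\hat{L}/\{0\}$ but actually qualify as covering algebras, which follows from the perfectness of $L$ and the fact that both the trivial extension and the universal extension are perfect central extensions.
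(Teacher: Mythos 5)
Your proposal is correct and follows essentially the same route as the paper: invoke Lemma \ref{covalg} to write an arbitrary covering algebra as $\hat{L}/C$ with $C\subseteq\hat{C}$, then use $\dim_\F\hat{C}=1$ to force $C=0$ or $C=\hat{C}$. The only difference is your added (harmless and reasonable) check that $L$ and $\hat{L}$ really are covering algebras, which the paper leaves implicit.
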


\begin{proof}
Let $\bar{L}$ be a covering algebra of $L$. It follows from Lemma \ref{covalg} that there
exists a central ideal $C$ of $\hat{L}$ such that $C\subseteq\hat{C}$ and $\bar{L}\cong
\hat{L}/C$. If $C=0$, then $\bar{L}\cong\hat{L}$. If $C\ne 0$, then for dimension reasons
$C=\hat{C}$, and so we have that $\bar{L}\cong\hat{L}/C=\hat{L}/\hat{C}\cong L$. 
\end{proof}

In the remainder of this section we apply Corollary \ref{1dimcent} to determine the covering
algebras of some Witt algebras.

Let $\F$ be a field of prime characteristic $p>3$. Then it is well known that the Witt algebra
$W:=W(1,1):=\Der(\F[x]/(x^p))$ is simple\footnote{This is the first non-classical simple modular
Lie algebra defined by Ernst Witt in the 1930's.} and has a unique non-split central extension
(up to equivalence), the \emph{restricted Virasoro algebra} $V$ (see \cite[Korollar on p.\
78]{Str})\footnote{Note that $V$ is a restricted Lie algebra (see Corollary 2.9 in Chapter
2 of \cite{SF}). Moreover, it follows from \cite[Theorem 5.1]{B1} that every Zassenhaus
algebra $W(1,n)$ over a field of prime characteristic $p>3$ has a unique non-split central extension
(up to equivalence), the so-called \emph{modular Virasoro algebra}.}. Such a Lie algebra is
explicitely given by 
$$V:=\bigoplus_{n=-1}^{p-2}\F e_n\oplus\F z$$
with Lie brackets defined as follows:
\begin{eqnarray*}
[e_m,e_n]=
\left\{
\begin{array}{cc}
(n-m)e_{m+n}\ & \mbox{if }-1\le m+n\le p-2\\ 
\frac{1}{6}(n-1)n(n+1)z & \mbox{ if }m+n=p\\
0 & \mbox{otherwise}
\end{array}
\right.
\end{eqnarray*}
and $[e_n,z]=0$ for every $n\in\{-1,0,1,\dots,p-3,p-2\}$.

The next result is perhaps well known. But since we could not find a suitable reference, we
include its proof for the convenience of the reader.

\begin{lem}\label{resvirasoro}
Let $V$ denote the restricted Virasoro algebra over a field $\F$ of prime characteristic $p>3$.
Then $\HCE^1(V,\F)=\HCE^2(V,\F)=0$. In particular, the restricted Virasoro algebra is a universal
central extension of the Witt algebra.
\end{lem}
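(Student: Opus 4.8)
The plan is to compute the two cohomology spaces $\HCE^1(V,\F)$ and $\HCE^2(V,\F)$ directly, exploiting the explicit basis and bracket structure of the restricted Virasoro algebra $V$, and then invoke the cohomological characterization of universal central extensions recalled in the paragraph preceding Lemma~\ref{split}. Indeed, once both groups are shown to vanish, the ``in particular'' statement follows immediately: since $0\to\F z\to V\stackrel{\pi}\to W\to 0$ is a central extension of the Witt algebra $W=W(1,1)$ (with $\pi(e_n)=e_n$ and $\pi(z)=0$), the vanishing $\HCE^1(V,\F)=\HCE^2(V,\F)=0$ says precisely that this central extension is universal.

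First I would deal with $\HCE^1(V,\F)=0$. A $1$-cocycle is a linear functional $f\colon V\to\F$ vanishing on $[V,V]$, so this amounts to checking that $V$ is perfect. Since $[e_0,e_n]=ne_n$ for $-1\le n\le p-2$, every $e_n$ with $n\ne 0$ lies in $[V,V]$, and $e_0=\frac{1}{2}[e_{-1},e_1]$ (using $[e_{-1},e_1]=2e_0$) shows $e_0\in[V,V]$ as well; finally the relation $[e_m,e_n]=\frac{1}{6}(n-1)n(n+1)z$ for $m+n=p$ produces $z$ as a bracket once one picks $n$ with $(n-1)n(n+1)\ne 0$ in $\F$, which is possible for $p>3$. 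Hence $V=[V,V]$ and $\HCE^1(V,\F)=0$.

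The substantive part is $\HCE^2(V,\F)=0$, and this is where I expect the main work to lie. A $2$-cocycle $\psi$ is an alternating bilinear form on $V$ satisfying the cocycle identity $\psi([x,y],w)+\psi([y,w],x)+\psi([w,x],y)=0$; I must show every such $\psi$ is a coboundary, i.e.\ $\psi(x,y)=g([x,y])$ for some linear $g\colon V\to\F$. The standard technique is to use the grading: assign degree $n$ to $e_n$ and degree $0$ to $z$, so that $V$ becomes $\Z$-graded (working modulo the identification coming from the bracket rule), and reduce to \emph{homogeneous} cocycles, using the action of the semisimple element $e_0$ via the Cartan-type argument that a cocycle of nonzero total degree is a coboundary. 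After this reduction one is left with finitely many normalization conditions: subtract a coboundary to arrange $\psi(e_0,e_n)=0$ for all $n$, then the cocycle identity applied to triples $(e_0,e_m,e_n)$ forces $\psi(e_m,e_n)=0$ whenever $m+n\ne 0$ and $m+n\ne p$. The remaining unknowns are the finitely many values $\psi(e_m,e_n)$ with $m+n\in\{0,p\}$ together with the terms involving $z$; these I would pin down by writing out the cocycle identity on the critical triples and solving the resulting small linear system. The main obstacle is precisely the bookkeeping around the ``wrap-around'' bracket $[e_m,e_n]=\frac{1}{6}(n-1)n(n+1)z$ when $m+n=p$, since the grading is only a $\Z/p$-grading in disguise and one must track carefully which triples $(e_i,e_j,e_k)$ with $i+j+k=p$ contribute a $z$-term to the cocycle identity; verifying that no genuinely new cohomology class survives there is the delicate step. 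As an alternative that sidesteps much of this computation, I would note that the footnote and Proposition~\ref{centext} suggest a uniform source: one may instead cite that $V$ is the modular Virasoro algebra and appeal to \cite[Theorem 5.1]{B1} together with the cohomological criterion, but since the paper states a self-contained proof is wanted, the direct cocycle computation above is the route I would carry out in full.
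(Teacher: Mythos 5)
Your treatment of $\HCE^1(V,\F)=0$ is complete and coincides with the paper's (show $V$ is perfect using $[e_0,e_n]=ne_n$, $[e_{-1},e_1]=2e_0$, and a wrap-around bracket hitting $z$). The problem is $\HCE^2(V,\F)=0$: what you give there is a plan, not a proof. You correctly set up the reduction — the degree-derivation argument kills homogeneous cocycles of degree $\not\equiv 0 \pmod p$, and normalizing $\psi(e_0,\cdot)=0$ then forces $\psi(e_m,e_n)=0$ unless $m+n\in\{0,p\}$ — but the entire content of the lemma sits in the step you defer: showing that the surviving unknowns $\psi(e_{-1},e_1)$, $\psi(e_m,e_{p-m})$ for $2\le m\le p-2$, and $\psi(e_0,z)$ are all killed (modulo coboundaries) by the Jacobi/cocycle identities on triples $(e_i,e_j,e_k)$ with $i+j+k\in\{0,p\}$, \emph{including} the triples where a $z$-term enters the identity. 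You explicitly name this as ``the delicate step'' and do not carry it out, so as written the argument does not establish the claim. (Note also that $z$ must sit in degree $p$, not degree $0$, for the grading to be bracket-compatible; you gesture at this but it needs to be fixed before the homogeneous reduction is legitimate.) Your fallback of citing \cite[Theorem 5.1]{B1} only yields $\dim_\F\HCE^2(W,\F)=1$, i.e.\ a statement about $W$, and does not by itself give $\HCE^2(V,\F)=0$ without a further argument passing from $W$ to $V$.

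That further argument is exactly what the paper supplies, by a route that avoids any cocycle computation on $V$: it applies the five-term exact sequence of Hochschild--Serre to the extension $0\to\F z\to V\to W\to 0$, obtaining
$$0\to\F\stackrel{d_2^\prime}\to\HCE^2(W,\F)\stackrel{l_2}\to\HCE^2(V,\F)\to\HCE^1(W,\HCE^1(\F z,\F))=0\,,$$
and then uses the two known inputs $\HCE^1(W,\F)=0$ and $\dim_\F\HCE^2(W,\F)=1$ to force $d_2^\prime$ to be an isomorphism, hence $l_2=0$ and $\HCE^2(V,\F)=\im(l_2)=0$. If you want to salvage your direct approach, you must actually solve the linear system on the critical pairs; otherwise the cleanest repair is to adopt the spectral-sequence argument, which converts the already-established one-dimensionality of $\HCE^2(W,\F)$ into the vanishing of $\HCE^2(V,\F)$ in three lines.
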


\begin{proof}
In order to prove that $\HCE^1(V,\F)=0$, we need to show that $V$ is perfect. We have
that $[e_2,e_{p-2}]=\frac{1}{6}(p-3)(p-2)(p-1)z$, and as $\frac{1}{6}(p-3)(p-2)(p-1)\ne
0$, we obtain that $z\in[V,V]$. In addition, $[e_0,e_n]=ne_n$ for every $n\in\{-1,\dots,p-2\}$
shows that $e_n\in[V,V]$ for every $n\ne 0$. Finally, we obtain from $[e_{-1},e_1]=2e_0$
that $e_0\in[V,V]$. Hence, we conclude that $V=[V,V]$.

Now we will prove that $\HCE^2(V,\F)=0$. We obtain from \cite[Theorem 7]{HS} the following
exact sequence:
$$\HCE^1(V,\F)\to\HCE^0(W,\HCE^1(\F z,\F))\stackrel{d_2^\prime}\to\HCE^2(W,\F)
\stackrel{l_2}\to\HCE^2(V,\F)\to\HCE^1(W,\HCE^1(\F z,\F))\,.$$
Since we have
$\HCE^1(\F z,\F)\cong\Hom_\F(\F z,\F)\cong\F$, we deduce that $\HCE^0(W,\HCE^1(\F z,\F))
\cong\F$ and $\HCE^1(W,\HCE^1(\F z,\F))\cong\HCE^1(W,\F)=0$. This in conjunction with
$\HCE^1(V,\F)=0$ yields the short exact sequence
$$0\to\F\stackrel{d_2^\prime}\to\HCE^2(W,\F)\stackrel{l_2}\to\HCE^2(V,\F)\to 0\,.$$
As $\dim_\F\HCE^2(W,\F)=1$, the injectivity of $d_2^\prime$ implies that $d_2^\prime$
is surjective. Hence, we obtain from the exactness of the sequence that $\Ker(l_2)=\im
(d_2^\prime)=\HCE^2(W,\F)$, and thus $l_2=0$. So we conclude that $\HCE^2(V,\F)=
\im(l_2)=0$. 

Finally, it follows from \cite[Lemma 3.1]{B1} and \cite[Lemma 7.9.3]{W} in conjunction with
$\HCE^1(V,\F)=\HCE^2(V,\F)=0$ that $V$ is a universal central extension of $W$.
\end{proof}

As an immediate consequence of Corollary \ref{1dimcent} and Lemma \ref{resvirasoro} we
obtain:

\begin{cor}\label{covalgreswitt}
Let $\F$ be a field of prime characteristic $p>3$. Then the Witt algebra and the restricted
Virasoro algebra are the only covering algebras of the Witt algebra up to isomorphism.
\end{cor}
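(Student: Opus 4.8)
The plan is to obtain the statement as an immediate application of Corollary~\ref{1dimcent} to the Witt algebra $L:=W$. First I would invoke Lemma~\ref{resvirasoro}, which asserts precisely that the restricted Virasoro algebra $V$ is a universal central extension of $W$. Reading off the explicit presentation $V=\bigoplus_{n=-1}^{p-2}\F e_n\oplus\F z$ together with the relations $[e_n,z]=0$, the corresponding central extension is $0\to\F z\to V\stackrel{\hat\pi}\to W\to 0$ with kernel the one-dimensional center $\hat C=\F z$. Thus $W$ admits a universal central extension whose central kernel satisfies $\dim_\F\hat C=1$, which is exactly the hypothesis of Corollary~\ref{1dimcent}.

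Feeding this into Corollary~\ref{1dimcent} then gives at once that $W$ and $V$ are the only covering algebras of $W$ up to isomorphism, as claimed. I do not expect any genuine obstacle at this stage: the entire substance of the result---namely that $V$ is perfect and that $\HCE^1(V,\F)=\HCE^2(V,\F)=0$, which is what certifies $V$ as a \emph{universal} central extension rather than merely a central extension---has already been settled in Lemma~\ref{resvirasoro}, while the one-dimensionality of $\hat C$ is manifest from the construction of $V$. Hence the corollary is purely formal once Corollary~\ref{1dimcent} and Lemma~\ref{resvirasoro} are in hand.
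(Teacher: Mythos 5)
Your proposal is correct and matches the paper exactly: the paper states the corollary as an immediate consequence of Corollary~\ref{1dimcent} applied with $\hat{L}=V$, $\hat{C}=\F z$ one-dimensional, using Lemma~\ref{resvirasoro} to certify that $V$ is a universal central extension of $W$. There is nothing to add.
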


For the remainder of this section let $\F$ be a field of characteristic zero. Then it is well known
that the infinite-dimensional two-sided Witt algebra\footnote{In Olivier Mathieu's paper the
Virasoro algebra is the two-sided Witt algebra $\wc$ (see \cite[(0.1.4)]{M}), but we follow
here the general custom to call the universal central extension $\vc$ of $\wc$ the Virasoro
algebra (see also footnote 3 in \cite{M}).} $\wc:=\Der(\F[t,t^{-1}])$ is simple (see \cite[Lemma
1.14]{M}) and has a unique non-split central extension (up to equivalence), the \emph{Virasoro
algebra} $\vc$ (see \cite[Section 1.3]{KRR}). Set
$$\vc:=\bigoplus_{n\in\Z}\F d_n\oplus\F c$$
with Lie brackets given by
$$[d_m,d_n]=(n-m)d_{m+n}+\delta_{m+n,0}\frac{1}{12}(n-1)n(n+1)c\,.$$

The first part of the next result is well-known (see \cite[Remark 6.4]{S} and \cite[Section 5.1]{ES}).
For the convenience of the reader we indicate a proof that is the precise analogue of the proof
of Lemma \ref{resvirasoro}.

\begin{lem}\label{virasoro}
Let $\vc$ denote the Virasoro algebra over a field $\F$ of characteristic zero. Then $\HCE^1
(\vc,\F)=\HCE^2(\vc,\F)=0$. In particular, the Virasoro algebra $\vc$ is a universal central
extension of the two-sided Witt algebra $\wc$.
\end{lem}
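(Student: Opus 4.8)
The plan is to follow the explicit template laid out by Lemma~\ref{resvirasoro}, since the statement promises that this is "the precise analogue." The proof splits into two independent computations, $\HCE^1(\vc,\F)=0$ and $\HCE^2(\vc,\F)=0$, followed by an immediate invocation of the cohomological characterization of universal central extensions. For the first vanishing, it suffices to show that $\vc$ is perfect, because $\HCE^1(\vc,\F)\cong\Hom_\F(\vc/[\vc,\vc],\F)$. Reading off the brackets, one computes $[d_{-1},d_1]=2d_0$, so $d_0\in[\vc,\vc]$; then $[d_0,d_n]=nd_n$ puts every $d_n$ with $n\ne0$ into the derived algebra; and finally $[d_{-2},d_2]$ produces a nonzero multiple of $c$ via the coefficient $\tfrac{1}{12}(2-1)(2)(2+1)=\tfrac12\ne0$, so $c\in[\vc,\vc]$ as well. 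Hence $\vc=[\vc,\vc]$ and $\HCE^1(\vc,\F)=0$.

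For the second vanishing I would run the Hochschild--Serre spectral sequence for the central ideal $\F c\subseteq\vc$ with quotient $\wc$, exactly as in Lemma~\ref{resvirasoro}. The five-term exact sequence from \cite[Theorem 7]{HS} reads
$$\HCE^1(\vc,\F)\to\HCE^0(\wc,\HCE^1(\F c,\F))\stackrel{d_2^\prime}\to\HCE^2(\wc,\F)\stackrel{l_2}\to\HCE^2(\vc,\F)\to\HCE^1(\wc,\HCE^1(\F c,\F)).$$
Since $\F c$ is central and one-dimensional, $\HCE^1(\F c,\F)\cong\Hom_\F(\F c,\F)\cong\F$ is a trivial $\wc$-module, whence $\HCE^0(\wc,\HCE^1(\F c,\F))\cong\F$ and $\HCE^1(\wc,\HCE^1(\F c,\F))\cong\HCE^1(\wc,\F)$. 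The latter vanishes because $\wc$ is simple, hence perfect. Combined with $\HCE^1(\vc,\F)=0$ already established, the five-term sequence collapses to a short exact sequence $0\to\F\stackrel{d_2^\prime}\to\HCE^2(\wc,\F)\stackrel{l_2}\to\HCE^2(\vc,\F)\to0$.

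The crux is then the input $\dim_\F\HCE^2(\wc,\F)=1$, which is the characteristic-zero counterpart of the fact used in Lemma~\ref{resvirasoro} and is exactly the uniqueness of the nonsplit central extension of $\wc$ cited in the paragraph preceding the statement. Granting this, the injection $d_2^\prime\colon\F\hookrightarrow\HCE^2(\wc,\F)$ is a map between one-dimensional spaces and is therefore an isomorphism, so $\im(d_2^\prime)=\HCE^2(\wc,\F)=\Ker(l_2)$, forcing $l_2=0$; since $l_2$ is surjective by exactness, $\HCE^2(\vc,\F)=\im(l_2)=0$. The only place where real care is needed is justifying $\dim_\F\HCE^2(\wc,\F)=1$ in the infinite-dimensional characteristic-zero setting and confirming that the edge map $d_2^\prime$ is genuinely nonzero (equivalently injective); both should follow from the cited uniqueness of the Virasoro extension and the standard identification of the transgression with the extension class. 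With $\HCE^1(\vc,\F)=\HCE^2(\vc,\F)=0$ in hand, the "in particular" clause is immediate from \cite[Lemma 3.1]{B1} and \cite[Lemma 7.9.3]{W}, giving that $\vc$ is a universal central extension of $\wc$.
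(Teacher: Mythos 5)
Your proposal is correct and follows essentially the same route as the paper: perfectness of $\vc$ via the same three brackets, then the Hochschild--Serre five-term exact sequence argument copied from Lemma~\ref{resvirasoro} together with $\HCE^1(\wc,\F)=0$ and $\dim_\F\HCE^2(\wc,\F)=1$ (which the paper cites from Schlichenmaier's work rather than re-deriving). Note that the injectivity of $d_2^\prime$ you worry about is automatic from exactness of the five-term sequence once $\HCE^1(\vc,\F)=0$ is known, so no separate identification of the transgression with the extension class is needed.
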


\begin{proof}
In order to prove that $\HCE^1(\vc,\F)=0$, we need to show that $\vc$ is perfect. We have
that $[d_0,d_n]=nd_n$ for any integer $n\ne 0$ which shows that $d_n\in[\vc,\vc]$ for $n
\ne 0$. Moreover, from $[d_{-1},d_1]=2d_0$ we obtain that $d_0\in[\vc,\vc]$. Finally,
$[d_{-2},d_2]=4d_0+\frac{1}{2}c$ implies that $c\in[\vc,\vc]$, and thus we have that
$\vc=[\vc,\vc]$.

Now we can use the same arguments as in the proof of Lemma \ref{resvirasoro} in conjunction
with $\HCE^1(\wc,\F)=0$ and $\dim_\F\HCE^2(\wc,\F)=1$ (see \cite[Section 2]{S}) to complete
the proof.
\end{proof}

Our final result of this section is then an immediate consequence of Corollary~\ref{1dimcent}
and Lemma \ref{virasoro}.

\begin{cor}\label{covalgwitt}
Let $\F$ be a field of characteristic zero. Then $\wc$ and the Virasoro algebra $\vc$
are the only covering algebras of the two-sided Witt algebra $\wc$ up to isomorphism.
\end{cor}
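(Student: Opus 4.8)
The plan is to mimic precisely the structure that the paper has already set up for Corollary \ref{covalgreswitt}, since the authors explicitly say this final result is an immediate consequence of Corollary~\ref{1dimcent} and Lemma \ref{virasoro}. First I would observe that the two-sided Witt algebra $\wc=\Der(\F[t,t^{-1}])$ is simple by \cite[Lemma 1.14]{M}, and in particular it is perfect, so that the notion of covering algebra applies to it and a universal central extension exists by the criterion quoted in the excerpt (a Lie algebra has a universal central extension if and only if it is perfect).

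Next I would invoke Lemma \ref{virasoro}, which gives $\HCE^1(\vc,\F)=\HCE^2(\vc,\F)=0$ and identifies the Virasoro algebra $\vc$ as a universal central extension of $\wc$. The central kernel of this extension is the one-dimensional span $\F c$, as is visible from the explicit presentation
$$\vc=\bigoplus_{n\in\Z}\F d_n\oplus\F c$$
given just before Lemma \ref{virasoro}. Thus $\wc$ admits a universal central extension $0\to\hat{C}\to\vc\to\wc\to 0$ with $\dim_\F\hat{C}=\dim_\F\F c=1$.

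With these two facts in hand, the hypotheses of Corollary \ref{1dimcent} are satisfied with $L:=\wc$, $\hat{L}:=\vc$, and $\hat{C}:=\F c$. I would then simply quote Corollary \ref{1dimcent} to conclude that $\wc$ and $\vc$ are the only covering algebras of $\wc$ up to isomorphism, which is exactly the assertion of Corollary \ref{covalgwitt}. No genuine obstacle arises here: the entire content has been front-loaded into Lemma \ref{virasoro} (the vanishing of the first and second scalar cohomology of $\vc$, establishing universality) and into Corollary \ref{1dimcent} (the dimension-counting argument that a one-dimensional central kernel forces exactly two covering algebras). The only point requiring a moment's care is confirming that the central kernel of the universal extension $\vc\to\wc$ is genuinely one-dimensional, but this is immediate from the displayed presentation of $\vc$ together with the fact that $c$ is central. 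Hence the proof is a two-line citation, structurally identical to the characteristic-$p$ case treated in Corollary \ref{covalgreswitt}.
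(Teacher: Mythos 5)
Your proposal is correct and follows exactly the paper's intended argument: the authors state that Corollary~\ref{covalgwitt} is an immediate consequence of Corollary~\ref{1dimcent} and Lemma~\ref{virasoro}, and you apply precisely those two results, correctly noting that the central kernel $\F c$ of the universal central extension $\vc\to\wc$ is one-dimensional. Nothing further is needed.
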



\section{Lie algebras whose derivation algebras are simple }\label{simderalg}


In this section we characterize Lie algebras whose derivation algebras are simple
or simple and complete. For a finite-dimensional Lie algebra $L$ over a field of
characteristic zero it follows from \cite[Theorem 4.4]{H} that $\Der(L)$ is simple
exactly when $L$ is simple. Some partial results for not necessarily finite-dimensional
Lie algebras over arbitrary fields were obtained by de Ruiter (see \cite[Theorem
3]{dR}). The next result settles this problem completely. 

We begin by introducing some notation. In analogy to the {\em idealizer\/} of a
subspace of $L$ we consider the {\em stabilizer\/} of a subspace $V$ of a (left)
$L$-module $M$, i.e.,
$$\stab_L(V):=\{x\in L\vert\,x\cdot V\subseteq V\}\,.$$  
Clearly, $\stab_L(V)$ is a subalgebra of $L$, and $V$ is an $L$-submodule of $M$
exactly when $\stab_L(V)=L$. Now, let
$$0\to\hat{C}\to\hat{\gf}\stackrel{\hat{\pi}}\to\gf\to 0$$
be a universal central extension of a centerless Lie algebra $\gf$. Then clearly $\hat{C}=
C(\hat{\gf})$ and, in view of \cite[Theorem 2.2]{BM}, every derivation $D$ of $\gf$ can
be uniquely lifted to a derivation $\hat{D}$ of $\hat{\gf}$, so that one has $\Der(\gf)
\cong\Der(\hat{\gf})$. Thus, we can view $C(\hat{\gf})$ as a $\Der(\gf)$-module by
setting $D\cdot x:=\hat{D}(x)$, for every $D\in\Der(L)$ and every $x\in C(\hat{\gf})$.
With respect to this action, $C(\hat{\gf})$ is annihilated by $\ad(\gf)$, so we can regard
$C(\hat{\gf})$ as an $\mathrm{Out}(\gf)$-module as well, where $\mathrm{Out}(\gf):=
\Der(\gf)/\ad(\gf)$ denotes the Lie algebra of outer derivations of $\gf$.

\begin{thm}\label{simple}
Let $L$ be a Lie algebra over a field $\F$. Then $\Der(L)$ is simple if, and only if, $L\cong
\hat{\gf}/C$, where $\hat{\gf}$ is a universal central extension of a simple Lie algebra
$\gf$ over $\F$ and $C$ is a central ideal of $\hat{\gf}$ such that $\stab_{\mathrm{Out}
(\gf)}(C)=0$.
\end{thm}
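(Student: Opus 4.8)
The plan is to isolate one structural lemma that powers both implications and then read off each direction from it. Throughout write $\hat\pi\colon\hat\gf\to\gf$ for a universal central extension of a simple Lie algebra $\gf$, so that $\hat C=\Ker(\hat\pi)=C(\hat\gf)$, and let $C\subseteq\hat C$ be a central ideal. Since every derivation maps the center into itself, the set $\mathcal{S}:=\{D\in\Der(\hat\gf)\mid D(C)\subseteq C\}$ is a subalgebra of $\Der(\hat\gf)$ containing the ideal $\ad(\hat\gf)$, and my first goal is to prove
\[
\Der(\hat\gf/C)\cong\mathcal{S}
\qquad\text{and}\qquad
\mathcal{S}/\ad(\hat\gf)\cong\stab_{\mathrm{Out}(\gf)}(C).
\]
For the first isomorphism I would use that $\hat\gf/C$ is a covering algebra of $\gf$, so by Lemma~\ref{covalg} the projection $\hat\gf\to\hat\gf/C$ is itself a universal central extension; hence by \cite[Theorem 2.2]{BM} every derivation of $\hat\gf/C$ lifts uniquely to a derivation of $\hat\gf$. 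A short computation then shows that a derivation of $\hat\gf$ descends to $\hat\gf/C$ precisely when it lies in $\mathcal{S}$, and that the lift map $\Der(\hat\gf/C)\to\mathcal{S}$ is a Lie algebra isomorphism (injectivity from perfectness of $\hat\gf$, surjectivity from uniqueness of lifts). The second isomorphism is a restatement of the $\mathrm{Out}(\gf)$-action on $\hat C$ recalled before the theorem: under $\Der(\gf)\cong\Der(\hat\gf)$ the class of $D$ acts on $\hat C$ by $D|_{\hat C}$, so stabilizing $C$ in $\mathrm{Out}(\gf)$ is exactly the condition $D(C)\subseteq C$ modulo $\ad(\hat\gf)$.

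Granting the lemma, the backward direction is immediate: the hypothesis $\stab_{\mathrm{Out}(\gf)}(C)=0$ forces $\mathcal{S}=\ad(\hat\gf)$, whence $\Der(\hat\gf/C)\cong\ad(\hat\gf)\cong\hat\gf/C(\hat\gf)=\gf$, which is simple.

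For the forward direction I would assume $\Der(L)$ is simple. Since $\ad(L)$ is an ideal of the simple Lie algebra $\Der(L)$, either $\ad(L)=0$ or $\ad(L)=\Der(L)$. The first case makes $L$ abelian and $\Der(L)=\mathfrak{gl}(L)$, which is never simple (it has a proper nonzero center when $\dim_\F L\ge 2$, and is one-dimensional abelian when $\dim_\F L=1$); hence $\ad(L)=\Der(L)$, so every derivation is inner and $\gf:=L/C(L)\cong\ad(L)=\Der(L)$ is simple. The crucial remaining point is that $L$ is \emph{perfect}. To see this I argue by contradiction: one has $L=[L,L]+C(L)$ because $[L,L]$ surjects onto the perfect algebra $\gf$, so choosing a vector-space complement $Z_1$ of $[L,L]\cap C(L)$ in $C(L)$ gives $L=[L,L]\oplus Z_1$ with $Z_1\subseteq C(L)$; if $Z_1\ne 0$, then any nonzero linear map $\phi\colon Z_1\to C(L)$, extended by $0$ on $[L,L]$, is a derivation (here $Z_1\subseteq C(L)$ is exactly what makes $[D_\phi x,y]+[x,D_\phi y]=0=D_\phi([x,y])$) which is not inner, contradicting $\ad(L)=\Der(L)$. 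Thus $Z_1=0$, so $C(L)\subseteq[L,L]$ and $L=[L,L]$. Now $L$ is a perfect central extension of the simple algebra $\gf$, i.e.\ a covering algebra, so Lemma~\ref{covalg} produces a universal central extension $\hat\gf$ of $\gf$ and a central ideal $C\subseteq\hat C$ with $L\cong\hat\gf/C$. Finally the structural lemma gives $\Der(L)\cong\mathcal{S}\supseteq\ad(\hat\gf)$; since $\ad(\hat\gf)\cong\gf\ne 0$ is an ideal of the simple Lie algebra $\mathcal{S}$, simplicity forces $\mathcal{S}=\ad(\hat\gf)$, i.e.\ $\stab_{\mathrm{Out}(\gf)}(C)=\mathcal{S}/\ad(\hat\gf)=0$, as required.

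The step I expect to be the main obstacle is the structural lemma, and within it the surjectivity of the lift map $\Der(\hat\gf/C)\to\mathcal{S}$: this is precisely where one needs that derivations of a \emph{perfect} Lie algebra lift to its universal central extension, applied here to $L=\hat\gf/C$ rather than to the centerless $\gf$ for which it was recalled. A secondary delicate point is the perfectness argument in the forward direction, where the explicit map $D_\phi$ must be verified to be a well-defined but non-inner derivation.
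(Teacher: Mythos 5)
Your proposal is correct and follows essentially the same route as the paper: identify $\gf:=\Der(L)=\ad(L)$ as simple, show $L$ is a perfect central extension (hence a covering algebra) of $\gf$, invoke Lemma~\ref{covalg} to write $L\cong\hat{\gf}/C$, and identify $\Der(\hat{\gf}/C)$ with the stabilizer subalgebra $\{D\in\Der(\hat{\gf})\mid D(C)\subseteq C\}$. The only differences are that you supply direct arguments for two facts the paper simply cites --- the perfectness of $L$ (the paper quotes de~Ruiter) and the identification of $\Der(\hat{\gf}/C)$ with the stabilizer (the paper quotes Neher's Theorem~2.6(c)) --- and both of your arguments are sound.
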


\begin{proof}
Suppose that $\Der(L)$ is simple. As $\ad(L)$ is an ideal of $\Der(L)$, we have $\Der
(L)=\ad(L)\cong L/C(L)$. Consequently, $L$ is a central extension of the simple Lie
algebra $\gf:=\Der(L)$. Moreover, it follows from \cite[Theorem 3\,(3)]{dR} that $L$
is perfect, and therefore $L$ is a covering algebra of $\gf$.

If $0\to\hat{C}\to\hat{\gf}\stackrel{\hat{\pi}}\to \gf\to 0$ is a universal central extension
of $\gf$, then we have that $\hat C=C(\hat{\gf})$ because $\gf$ is centerless. Thus, by
Lemma \ref{covalg}, there exists a central ideal $C$ of $\hat{\gf}$ such that $L\cong
\hat{\gf}/C$. Consequently, we deduce from \cite[Theorem 2.6\,(c)]{N} that $\Der(L)$
is isomorphic to the subalgebra of $\Der(\hat{\gf})$ consisting of all derivations $D$ such
that $D(C)\subseteq C$. Since this subalgebra contains $\ad(\hat{\gf})$ as an ideal, the
simplicity of $\Der(L)$ implies that the derivations of $\hat{\gf}$ leaving $C$ invariant are
precisely the inner derivations. This shows that $\stab_{\mathrm{Out}(\gf)}(C)=0$, which
finishes the proof that the given conditions are necessary.

Conversely, suppose that $0\to\hat{C}\to\hat{\gf}\stackrel{\hat{\pi}}\to\gf\to 0$ is a 
universal central extension of a simple Lie algebra $\gf$ such that $L\cong\hat{\gf}/C$
and $\stab_{\mathrm{Out}(\gf)}(C)=0$ for some central ideal $C$ of $\hat{\gf}$. Since
$\gf$ is centerless and $\hat{\gf}$ is perfect, we have that $\hat C=C(\hat{\gf})$ and
$L$ is perfect. From this we conclude that $L$ is a covering algebra of $\gf$. Hence, it
follows from Lemma \ref{covalg} that $\hat{\gf}$ is also a universal central extension
of $L$. As a consequence, it follows from \cite[Theorem 2.6\,(c)]{N} that $\Der(L)$
is isomorphic to the subalgebra of $\Der(\hat{\gf})$ consisting of all derivations $D$
such that $D(C)\subseteq C$. But $\stab_{\mathrm{Out}(\gf)}(C)=0$, so we necessarily
have $\Der(L)\cong\ad(\hat{\gf})\cong \hat{\gf}/C(\hat{\gf})\cong \gf$. In particular,
as $\gf$ is simple, $\Der(L)$ is simple.
\end{proof}

\begin{cor}
\label{simplecomplete}
Let $L$ be a Lie algebra over a field $\F$. Then $\Der(L)$ is simple and complete if,
and only if, $L$ is a covering algebra of a complete simple Lie algebra over $\F$.
\end{cor}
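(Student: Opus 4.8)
The plan is to deduce Corollary~\ref{simplecomplete} directly from Theorem~\ref{simple} by analyzing the extra condition ``complete''. Recall that a Lie algebra is complete when it has zero center and every derivation is inner. I would argue both implications by keeping careful track of what completeness of $\Der(L)$ buys us.

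For the forward direction, suppose $\Der(L)$ is simple and complete. By Theorem~\ref{simple} we already know $L\cong\hat{\gf}/C$ for a universal central extension $\hat{\gf}$ of a simple Lie algebra $\gf$ with $\stab_{\mathrm{Out}(\gf)}(C)=0$, and from the proof of Theorem~\ref{simple} we have the identification $\Der(L)\cong\gf$. The point is to use completeness to pin down $C$. Since $\Der(L)$ is complete and $\Der(L)\cong\gf$, the simple Lie algebra $\gf$ itself is complete, so $\mathrm{Out}(\gf)=0$. But then the stabilizer condition $\stab_{\mathrm{Out}(\gf)}(C)=0$ holds automatically and gives no restriction on $C$, so I must extract the constraint differently. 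The right observation is that completeness of $\Der(L)$ means $\Der(L)=\ad(L)$, i.e.\ $L$ has no outer derivations; combined with $\Der(L)\cong\ad(\hat{\gf})\cong\gf$ via the isomorphism $\Der(L)\cong\Der(\hat{\gf})\cap\{D:D(C)\subseteq C\}$ from \cite[Theorem 2.6\,(c)]{N}, every such $D$ is forced to be inner on $\hat{\gf}$, and I would check this says nothing new about $C$ either. So the real content is that $L$ itself must be a covering algebra of the complete simple algebra $\gf$; since $L$ is already a covering algebra of $\gf$ (established in the proof of Theorem~\ref{simple}) and $\gf$ is complete, this direction is essentially immediate once we record that $\gf\cong\Der(L)$ is simple and complete.

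For the converse, suppose $L$ is a covering algebra of a complete simple Lie algebra $\gf$. Then $L$ is perfect and $C(L)$ is the kernel of the central extension; writing $0\to\hat{C}\to\hat{\gf}\to\gf\to 0$ for a universal central extension of $\gf$, Lemma~\ref{covalg} gives a central ideal $C\subseteq\hat{C}=C(\hat{\gf})$ with $L\cong\hat{\gf}/C$. Because $\gf$ is complete we have $\mathrm{Out}(\gf)=0$, so trivially $\stab_{\mathrm{Out}(\gf)}(C)=0$, and Theorem~\ref{simple} yields that $\Der(L)$ is simple with $\Der(L)\cong\gf$. It then remains to verify that $\Der(L)$ is complete, but this is immediate since $\Der(L)\cong\gf$ and $\gf$ is complete by hypothesis.

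The main obstacle I anticipate is the bookkeeping in the forward direction: making sure that ``$\Der(L)$ complete'' is translated into the correct statement about $\gf$ and about $C$, and confirming that the resulting $L$ is genuinely a covering algebra of a \emph{complete} simple Lie algebra rather than merely of a simple one. The subtlety is that the condition $\stab_{\mathrm{Out}(\gf)}(C)=0$ becomes vacuous precisely when $\gf$ is complete, so the completeness of $\Der(L)$ must be seen to be equivalent to $\mathrm{Out}(\gf)=0$ (i.e.\ to completeness of $\gf$) rather than to any finer property of $C$. Once this equivalence is articulated cleanly via the isomorphism $\Der(L)\cong\gf$ supplied by Theorem~\ref{simple}, both implications should follow without further computation, and the corollary reduces to the tautology that $\Der(L)$ is complete if and only if the simple algebra $\gf\cong\Der(L)$ it is isomorphic to is complete.
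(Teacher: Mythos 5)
Your proposal is correct and follows essentially the same route as the paper: both directions are deduced from Theorem~\ref{simple} together with Lemma~\ref{covalg}, with the key observation that $\Der(L)\cong\gf$, so completeness of $\Der(L)$ is equivalent to completeness of $\gf$ and the stabilizer condition becomes vacuous. The only cosmetic difference is that for the sufficiency direction the paper invokes the lifting of derivations to the universal central extension directly rather than re-entering through the statement of Theorem~\ref{simple}, but the underlying argument is the same.
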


\begin{proof}
Firstly, suppose that $L$ is a covering algebra of a complete simple Lie algebra $\gf$.
Since $\gf$ is perfect, it follows from \cite[Theorem 7.9.2]{W} that $\gf$ has a universal
central extension $0\to\hat{C}\to\hat{\gf}\stackrel{\hat{\pi}}\to\gf\to 0$. By virtue
of Lemma \ref{covalg}, $\hat{\gf}$ is also a universal central extension of $L$. As
a consequence, we conclude from \cite[Theorem 2.2]{BM} in conjunction with the
completeness of $\gf$ that $\Der(L)\cong\Der(\hat{\gf})\cong\Der(\gf)\cong\gf$.
In particular, as $\gf$ is simple and complete, $\Der(L)$ is also simple and complete,
which shows the sufficiency of the condition.

Conversely, suppose now that $\Der(L)$ is simple and complete. By Theorem
\ref{simple} there exists a simple Lie algebra $\gf$ such that $L\cong\hat{\gf}/C$,
where $\hat{\gf}$ is the universal central extension of $\gf$ and $C$ is a central
ideal of $\hat{\gf}$. Then we have that
$$L/C(L)\cong[\hat{g}/C]/[C(\hat{g})/C]\cong\hat{\gf}/C(\hat{\gf})\cong\gf\,,$$
and because $\hat{g}$ is perfect, the same is true for $L$. Hence, $L$ is a covering
algebra of $\gf$. Finally, as in second part of the proof of Theorem \ref{simple}, we
obtain that $\gf\cong\Der(L)$, which implies that $\gf$ is simple and complete.
\end{proof}

Let $L$ be a finite-dimensional Lie algebra over a field of characteristic zero. From Theorem
\ref{simple} in conjunction with Whitehead's lemmas and Lemma \ref{split} it follows that
$\Der(L)$ is simple (and complete) if, and only if, $L$ is simple (and complete). According to
(v) of Corollary~\ref{primchar} below, the same conclusion fails in non-zero characteristic.
By using the Block-Wilson-Strade-Premet classification of finite-dimensional modular simple
Lie algebras (see \cite[Theorem 1.1]{PS}), we obtain the following result which shows that,
with the exception of the restricted Virasoro algebra, all finite-dimensional Lie algebras
over an algebraically closed field of prime characteristic $p>3$ that have a complete
simple derivation algebra are themselves simple and complete.

\begin{cor}\label{primchar}
Let $L$ be a finite-dimensional Lie algebra over an algebraically closed field $\F$ of prime
characteristic $p>3$. Then $\Der(L)$ is simple and complete if, and only if, $L$ is isomorphic
to one of the following Lie algebras:
\begin{enumerate}
\item[{\rm (i)}] a simple Lie algebra of classical type that is not isomorphic to $\psl_{np}(\F)$
                          for some positive integer $n$, or
\item[{\rm (ii)}] a Jacobson-Witt algebra $W(r,\underline{1})$, or
\item[{\rm (iii)}] a restricted contact Lie algebra $K(2r+1,\underline{1})^{(1)}$, where $r+2
                          \not\equiv 0\,\mbox{\rm (mod $p$)}$, or
\item[{\rm (iv)}] the restricted Melikian algebra $\mc(1,1)$, or
\item[{\rm (v)}] the restricted Virasoro algebra $V$.
\end{enumerate}
\end{cor}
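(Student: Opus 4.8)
The plan is to first reduce the statement to a question about complete simple Lie algebras and their covering algebras by invoking Corollary~\ref{simplecomplete}: since $L$ is finite-dimensional, $\Der(L)$ is simple and complete if, and only if, $L$ is a covering algebra of a finite-dimensional complete simple Lie algebra $\gf$ over $\F$ (note that $\gf\cong L/C(L)$ is automatically finite-dimensional). So I would organize the proof around two tasks: (a) list all finite-dimensional complete simple Lie algebras $\gf$ over $\F$, and (b) determine, for each such $\gf$, all of its covering algebras. For (b) I would use that $\gf$ is perfect and finite-dimensional, so it has a finite-dimensional universal central extension $\hat{\gf}$ with $\hat{C}=C(\hat{\gf})$ of dimension $\dim_\F\HCE^2(\gf,\F)$; by Lemma~\ref{covalg} the covering algebras of $\gf$ are exactly the quotients $\hat{\gf}/C$ by central ideals $C\subseteq\hat{C}$. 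In particular, if $\HCE^2(\gf,\F)=0$ then $\gf$ is its own unique covering algebra, and if $\dim_\F\HCE^2(\gf,\F)=1$ then, by Corollary~\ref{1dimcent}, the covering algebras of $\gf$ are exactly $\gf$ and $\hat{\gf}$.

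For task (a) I would appeal to the Block--Wilson--Strade--Premet classification \cite[Theorem 1.1]{PS}, according to which $\gf$ is of classical, Cartan, or Melikian type, and then test completeness type by type. As a simple Lie algebra is centerless, completeness of $\gf$ amounts to $\Der(\gf)=\ad(\gf)$, so the verification is a matter of reading off the known derivation algebras from \cite{StrI}. The expected outcome is the following. Among the classical Lie algebras exactly those not isomorphic to $\psl_{np}(\F)$ are complete (the latter carrying outer diagonal derivations). Among the Cartan type algebras only the Jacobson--Witt algebras $W(r,\underline{1})$ and the contact algebras $K(2r+1,\underline{1})^{(1)}$ with $r+2\not\equiv 0\pmod{p}$ are complete, since the higher truncations $W(r,\underline{n})$ with $\underline{n}\ne\underline{1}$ acquire the outer derivations $\partial_i^{p^k}$, the special and Hamiltonian algebras carry outer derivations (as recorded in \cite{StrI}, in analogy with the characteristic-zero computation of Proposition~\ref{complete}), and for $r+2\equiv 0\pmod{p}$ the simple algebra $K(2r+1,\underline{1})^{(1)}$ sits as a codimension-one ideal in its derivation algebra. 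Finally, among the Melikian algebras only the restricted one $\mc(1,1)$ is complete. This produces precisely the algebras in (i)--(iv).

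For task (b) I would establish that every complete simple $\gf$ from (a) satisfies $\HCE^2(\gf,\F)=0$, with the sole exception of the Witt algebra $W(1,\underline{1})$, for which $\dim_\F\HCE^2(W(1,\underline{1}),\F)=1$ and whose universal central extension is the restricted Virasoro algebra $V$ (Lemma~\ref{resvirasoro}). Granting this, the framework of the first paragraph together with Corollary~\ref{covalgreswitt} shows that every complete simple $\gf\ne W(1,\underline{1})$ is its own only covering algebra, whereas $W(1,\underline{1})$ has exactly the two covering algebras $W(1,\underline{1})$ and $V$. Hence the covering algebras of complete simple Lie algebras over $\F$ are exactly the algebras listed in (i)--(v), which establishes the necessity. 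The converse is then immediate: the algebras in (i)--(iv) are complete simple, and $V$ is a covering algebra of the complete simple algebra $W(1,\underline{1})$, so in each case Corollary~\ref{simplecomplete} yields that $\Der(L)$ is simple and complete.

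I expect the main obstacle to lie entirely in the structural inputs of tasks (a) and (b): determining which modular simple Lie algebras are complete, and --- more delicately --- verifying that $\HCE^2(\gf,\F)$ vanishes for every complete simple $\gf$ other than $W(1,\underline{1})$. The latter is what rules out unexpected perfect central extensions of the classical algebras (beyond the already excluded $\psl_{np}$), of the contact algebras, and of $\mc(1,1)$, and thereby guarantees that no Lie algebras outside (i)--(v) arise; this step rests on the detailed theory of derivations and second cohomology of the classical, Cartan, and Melikian Lie algebras and is where I would lean most heavily on the literature.
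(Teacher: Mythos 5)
Your proposal is correct and follows essentially the same route as the paper: reduce via Corollary~\ref{simplecomplete} to classifying covering algebras of complete simple Lie algebras, identify the complete simple algebras type by type using the Block--Wilson--Strade--Premet classification, and then show $\HCE^2(\gf,\F)=0$ in every case except $W(1,\underline{1})$, whose unique nontrivial covering algebra is the restricted Virasoro algebra. The paper fills in the literature citations you defer to (Berman for classical type, Strade's Theorems 7.1.2 and 7.1.4 for completeness of Cartan and Melikian types, Block and Farnsteiner and Premet--Strade for the vanishing of $\HCE^2$), but the logical skeleton is identical.
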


\begin{proof}
In view of Corollary \ref{simplecomplete}, $\Der(L)$ is simple and complete if, and only
if, $L$ is a covering algebra of a complete simple Lie algebra $\gf$. According to
\cite[Theorem~1.1]{PS}, $\gf$ is either of classical or Cartan type or isomorphic to
a Melikian algebra. By virtue of Remark (1) after Theorem 1 in \cite{B}, the only simple
Lie algebra of classical type over a field of prime characteristic $p>3$ that has outer
derivations is $\psl_{pn}(\F)$ for some positive integer $n$. Moreover, we obtain
from \cite[Theorem 7.1.2]{StrI} that the only complete simple Lie algebras of Cartan
type are the Jacobson-Witt algebras $W(r,\underline{1})$ or those restricted contact
Lie algebras $K(2r+1,\underline{1})^{(1)}$ for which $r+2\not\equiv 0\,\mbox{\rm
(mod $p$)}$. Finally, it follows from \cite[Theorem 7.1.4]{StrI} that the only complete
Melikian algebra is $\mc(1,1)$.

It remains to determine the corresponding covering algebras of $\gf$. Suppose first that
$L$ is a covering algebra of a simple Lie algebra $\gf$ of classical type that is not isomorphic
to $\psl_{np}(\F)$ for some positive integer $n$. Then it follows from Theorem~3.1 of
\cite{B1} in conjunction with Lemma \ref{split} that $L\cong\gf$.

Next, suppose that $L$ is a covering algebra of $\gf$, where $\gf$ is either a Jacobson-Witt
algebra $W(r,\underline{1})$ with $r>1$ or a restricted contact Lie algebra $K(2r+1,
\underline{1})^{(1)}$ with $r+2\not\equiv 0\,\mbox{\rm (mod $p$)}$. Then we obtain
from \cite[Theorem 3.2\,(1) and Theorem 4.5]{F2}\footnote{Note that the results for
the central extensions of the contact Lie algebras announced in \cite[Theorem 2]{D1}
contain some typos.} that $\HCE^2(\gf,\F)=0$. Now as in the previous paragraph we
can apply Lemma \ref{split} to conclude that $L\cong\gf$.

Thirdly, suppose that $L$ is a covering algebra of the restricted Melikian algebra $\mc
(1,1)$. According to \cite[Proposition 6.2]{PS}, we have that $\HCE^2(\mc(1,1),\F)=0$,
and thus as before we can apply Lemma \ref{split} to deduce that $L\cong\mc(1,1)$.

Finally, suppose that $L$ is a covering algebra of the Witt algebra $W(1,1)$. Then we
obtain from Corollary \ref{covalgreswitt} that $L\cong W(1,1)$ or $L$ is isomorphic to
the restricted Virasoro algebra $V$.
\end{proof}

\begin{rem}\label{remprimchar}
\emph{We would like to point out to the interested reader the following observations:}

\emph{(1) Note that a complete Lie algebra is necessarily restricted with a unique
$[p]$-map (see the remark after the definition on p.\ 72 and Corollary 2.2\,(1) in
Chapter~2 of \cite{SF}). So it should not be surprising that all the Lie algebras
appearing in (i)--(v) of Corollary \ref{primchar} are restricted. Moreover, in principle,
in the proof of Corollary~\ref{primchar} we would have needed only the classification
of the \emph{restricted} simple Lie algebras (see \cite{BW}). But since the Block-Wilson
classification of the finite-dimensional restricted simple Lie algebras is only valid for
algebraically closed fields of prime characteristic $p>7$, this would have given a
weaker result.}

\emph{(2) One might be tempted to use Theorem \ref{simple} and the Block-Wilson-Strade-Premet
classification in order to find also an explicit description of those finite-dimensional Lie
algebras over algebraically closed fields of prime characteristic $p>3$ that have a (not
necessarily complete) simple derivation algebra. However, for Lie algebras $\gf$ of
Cartan type, it turns out to be hard to determine those central ideals $C$ that satisfy
the conditions in Theorem \ref{simple} unless $\mathrm{Out}(\gf)=0$ or $\dim_\F
\hat{C}\leq 1$. In this regard, it is worth recalling that $\hat{C}$ is isomorphic to
$\HCE_2(\gf,\F)$, the second homology space of $\gf$ with coefficients in the
trivial module $\F$ (see e.g., \cite[Theorem~7.9.2]{W}). In turn, it follows from
Proposition 5.1 in Chapter VI of \cite{CE} that $\HCE_2(\gf,\F)^*\cong\HCE^2
(\gf,\F)$, where $(-)^*$ denotes the linear dual. Consequently, the finite-dimensional
(not necessarily restricted) graded simple Lie algebras for which $\dim_\F\hat{C}>1$
holds are the special Lie algebras $S(r,\underline{n})^{(1)}$, the Hamiltonian algebras
$H(2r,\underline{n})^{(2)}$, and those contact Lie algebras $K(2r+1,\underline{n})^{(1)}$
that satisfy $r+2\equiv 0\,\mbox{\rm (mod $p$)}$ or $r+3\equiv 0\,\mbox{\rm (mod
$p$)}$ (see \cite[Corollary 4.1]{C}, \cite[Theorem 2.4\,(2) and (3)]{F1}, and
\cite[Theorem~4.5]{F2}). But in addition, there are also the non-graded simple
Lie algebras of Cartan type whose central extensions do not seem to be known
except in rank one (see \cite[Theorem~5.1]{B1} for the Albert-Frank algebras and
\cite[Theorem 6.3]{Str2} for the Block algebras).}

\emph{(3) As mentioned earlier, for a finite-dimensional Lie algebra $L$ over a field
of characteristic zero, Hochschild proved in \cite[Theorem 4.4]{H} that $\Der(L)$ is
semi-simple if, and only if, $L$ is semi-simple. In view of (v) in Corollary \ref{primchar},
this is not true in non-zero characteristic. But it would be very interesting to know
which (finite-dimensional) modular Lie algebras have a semi-simple derivation algebra.
Unfortunately, at the moment it seems that this problem is out of reach. Even in the
finite-dimensional case, the main difficulty is that modular semi-simple Lie algebras need
not be a direct product of simple Lie algebras, and the description provided by Richard
Block in \cite[Theorem 9.3]{B2} seems not to be helpful for our purposes.}

\emph{(4) The restricted Virasoro algebra $V$ is an example of a finite-dimensional
Lie algebra over a field of non-zero characteristic whose derivation algebra is simple, but
whose center is non-zero (see also the Remark after Theorem 3 in \cite{dR}, where an
example of an infinite-dimensional Lie algebras over a field of characteristic zero having
these properties is mentioned). Of course, by virtue of Hochschild's classical result
\cite[Theorem 4.4]{H}, every finite-dimensional Lie algebra over a field of characteristic
zero whose derivation algebra is simple is itself semi-simple, and thus its center is zero.}
\end{rem}

As another application of Corollary \ref{simplecomplete} we determine those Lie algebras
that have a complete simple derivation algebra and are $\Z$-graded of finite growth over
an algebraically closed field $\F$ of characteristic zero.

\begin{cor}\label{char0}
Let $L$ be a $\Z$-graded Lie algebra of finite growth over an algebraically closed field $\F$
of characteristic zero. Then $\Der(L)$ is simple and complete if, and only if, $L$ is isomorphic
to one of the following Lie algebras:
\begin{enumerate}
\item[{\rm (i)}] a finite-dimensional simple Lie algebra, or
\item[{\rm (ii)}] a one-sided Witt algebra $W_r$, or
\item[{\rm (iii)}] a contact Lie algebra $K_{2r+1}$, or
\item[{\rm (iv)}] the two-sided Witt algebra $\wc$, or
\item[{\rm (v)}] the Virasoro algebra $\vc$.
\end{enumerate}
\end{cor}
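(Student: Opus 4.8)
The plan is to follow the template of the proof of Corollary~\ref{primchar}, replacing the Block--Wilson--Strade--Premet classification by Olivier Mathieu's classification of the simple $\Z$-graded Lie algebras of finite growth (see \cite{M}). By Corollary~\ref{simplecomplete}, $\Der(L)$ is simple and complete if, and only if, $L$ is a covering algebra of a complete simple Lie algebra $\gf$. Since a covering algebra is a central extension and $\gf$ is centerless, we have $\gf\cong L/C(L)$. As the center of a $\Z$-graded Lie algebra is a graded ideal and the growth of a homomorphic image does not exceed the growth of $L$, the quotient $\gf$ is again $\Z$-graded of finite growth. Hence I would invoke Mathieu's classification to conclude that $\gf$ is either a finite-dimensional simple Lie algebra, one of the Cartan algebras $W_r$, $S_r$, $H_{2r}$, $K_{2r+1}$, or the two-sided Witt algebra $\wc$.

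The next step is to single out the complete members of this list. A finite-dimensional simple Lie algebra over a field of characteristic zero has zero center and, by Whitehead's first lemma, only inner derivations, hence is complete. By Proposition~\ref{complete}, among the Cartan algebras exactly $W_r$ and $K_{2r+1}$ are complete, so $S_r$ and $H_{2r}$ are discarded. Finally, the two-sided Witt algebra $\wc$ is complete: it is simple, and the Ikeda--Kawamoto computation of the derivations of $W(G,I)$ in \cite{IK}, applied to $W(\Z,\{1\})=\wc$, shows that every derivation is inner. The crucial point here, in contrast to $S_r$ and $H_{2r}$, is that the degree derivation of $\wc$ is realized by $\ad(d_0)$ with $d_0\in\wc$. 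Thus the complete simple candidates for $\gf$ are the finite-dimensional simple Lie algebras, $W_r$, $K_{2r+1}$, and $\wc$.

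It then remains to read off the covering algebras of each such $\gf$. For a finite-dimensional simple $\gf$, Whitehead's second lemma gives $\HCE^2(\gf,\F)=0$, while for $\gf=W_r$ or $\gf=K_{2r+1}$ Proposition~\ref{centext} yields $\HCE^2(\gf,\F)=0$; in each of these cases Lemma~\ref{split} shows that $\gf$ is its own unique covering algebra, so that $L\cong\gf$, giving (i)--(iii). For $\gf=\wc$, Corollary~\ref{covalgwitt} shows that the only covering algebras are $\wc$ itself and the Virasoro algebra $\vc$, which yields (iv) and (v). This establishes the necessity of the condition.

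For the converse, each algebra in (i)--(v) is a covering algebra of a complete simple Lie algebra: in cases (i)--(iv) the algebra is itself complete and simple, hence, being perfect, a covering algebra of itself, while in case (v) the Virasoro algebra $\vc$ is the universal central extension, and in particular a covering algebra, of the complete simple algebra $\wc$. In all cases Corollary~\ref{simplecomplete} then gives that $\Der(L)$ is simple and complete. The main obstacle I anticipate is the correct invocation of Mathieu's classification together with the verification that $\wc$ is complete; once these two inputs are in place, the argument is a direct assembly of Proposition~\ref{complete}, Proposition~\ref{centext}, Lemma~\ref{split}, Corollary~\ref{covalgwitt}, and Corollary~\ref{simplecomplete}.
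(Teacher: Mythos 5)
Your proposal is correct and follows essentially the same route as the paper: Corollary~\ref{simplecomplete} reduces the problem to covering algebras of complete simple Lie algebras, Mathieu's classification lists the candidates, Proposition~\ref{complete} discards $S_r$ and $H_{2r}$, and Lemma~\ref{split}, Proposition~\ref{centext}, and Corollary~\ref{covalgwitt} identify the covering algebras. The only divergence is cosmetic: for the completeness of $\wc$ you appeal to the Ikeda--Kawamoto computation for $W(\Z,\{1\})$, whereas the paper cites \cite{ZM} or \cite[Theorem A.1.1]{ES}; both are valid, and your explicit check that $\gf\cong L/C(L)$ is again $\Z$-graded of finite growth is a small detail the paper leaves implicit.
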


\begin{proof}
According to Corollary \ref{simplecomplete}, $\Der(L)$ is simple and complete if, and only if,
$L$ is a covering algebra of a complete simple Lie algebra $\gf$. It follows from the main
theorem and Lemma 1.14 of \cite{M} that $\gf$ is either a finite-dimensional simple Lie
algebra or a Cartan algebra or it is isomorphic to the two-sided Witt algebra $\wc$.

According to Whitehead's first lemma, every finite-dimensional simple Lie algebra is complete.
Moreover, it follows from Proposition \ref{complete} that $W_r$ and $K_{2r+1}$ are the
only complete Cartan algebras. Finally, it is known that the two-sided Witt algebra $\wc$
is complete (see \cite{ZM} or \cite[Theorem A.1.1]{ES}).

It remains to determine the corresponding covering algebras of $\gf$. Suppose first that
$L$ is a covering algebra of a finite-dimensional simple Lie algebra $\gf$. Then it follows
from Whitehead's second lemma in conjunction with Lemma \ref{split} that $L\cong\gf$.

Next, suppose that $L$ is a covering algebra of $\gf$, where $\gf$ is either a one-sided
Witt algebra $W_r$ or a contact Lie algebra $K_{2r+1}$. Then we obtain from Proposition~\ref{centext}
that $\HCE^2(\gf,\F)=0$. Now as in the previous paragraph we can apply Lemma \ref{split}
to conclude that $L\cong\gf$.

Finally, suppose that $L$ is a covering algebra of the two-sided Witt algebra $\wc$. Then
we obtain from Corollary \ref{covalgwitt} that $L\cong\wc$ or $L\cong\vc$.
\end{proof}

\begin{rem}
\emph{(1) Similar to Remark \ref{remprimchar}\,(2), one might be tempted to use
Theorem~\ref{simple} in conjunction with the main result of \cite{M} in order to find
also an explicit description of those $\Z$-graded Lie algebras of finite growth over an
algebraically closed field $\F$ of characteristic zero that have a (not necessarily complete)
simple derivation algebra. However, for Cartan algebras $\gf$, it turns out to be hard to
determine those central ideals $C$ that satisfy the conditions in Theorem~\ref{simple}
unless $\mathrm{Out}(\gf)=0$ or $\dim_\F\hat{C}\leq 1$. As already has been
observed in Remark \ref{remprimchar}\,(2), we have that $\hat{C}\cong\HCE_2
(\gf,\F)$ and $\HCE_2(\gf,\F)^*\cong\HCE^2 (\gf,\F)$, and therefore Theorem 1 in
\cite{D2} indicates that the $\Z$-graded simple Lie algebras for which $\dim_\F
\hat{C}>1$ seem to be again the special Lie algebras and the Hamiltonian algebras.}

\emph{(2) The Virasoro algebra $\vc$ is an example of an infinite-dimensional
Lie algebra whose derivation algebra is simple, but whose center is non-zero (see the
Remark after Theorem 3 in \cite{dR} for another such example due to de la Harpe that
seems to be much more complicated).}
\end{rem}

It should be mentioned that each of the Lie algebras appearing in (i)--(iii) and (v) of
Corollaries \ref{primchar} and \ref{char0} correspond remarkably well. Moreover, note
that the restricted Melikian algebra $\mc(1,1)$ in (iv) of Corollary \ref{primchar} is
closely related to the Jacobson-Witt algebra $W(2,(1,1))$, but unlike the two-sided Witt
algebra $\wc$, $\mc(1,1)$ has no non-trivial central extensions (see Proposition 6.2
in \cite{PS}). On the other hand, the one-sided Witt algebras $W_r$ have no non-trivial
central extensions (see Proposition \ref{centext}), but similar to the two-sided Witt
algebra $\wc$, the restricted Witt algebra $W(1,1)$ has a universal central extension,
namely, the restricted Virasoro algebra $V$ (see Lemma \ref{resvirasoro}).



\end{document}